\numberwithin{equation}{section}
\title{\bf Indefinite Linear-Quadratic Optimal Control Problems of Backward Stochastic Differential Equations with Partial Information}
\author{\normalsize 
		Jialong Li\thanks{\it School of Mathematics, Shandong University, Jinan 250100, P.R. China, E-mail: lijialong@mail.sdu.edu.cn},\quad
		Zhiyong Yu\thanks{\it  School of Mathematics, Shandong University, Jinan 250100, P.R. China, E-mail: yuzhiyong@sdu.edu.cn},\quad 
		Wanying Yue\thanks{\it Corresponding author. School of Mathematics, Shandong University, Jinan 250100, P.R. China, E-mail: yuewanying@mail.sdu.edu.cn}
		}
\newtheorem{Proposition}{Proposition}[section]
\newtheorem{Theorem}{Theorem}[section]
\newtheorem{Lemma}{Lemma}[section]
\newtheorem{Remark}{Remark}[section]
\newtheorem{Example}{Example}[section]
\newtheorem{Corollary}{Corollary}[section]
\begin{document}

\maketitle

\noindent{\bf Abstract:}\quad
This paper is concerned with a kind of linear-quadratic (LQ) optimal control problem of backward stochastic differential equation (BSDE) with partial information.
The cost functional includes cross terms between the state and control, and the weighting matrices are allowed to be indefinite.
Through variational methods and stochastic filtering techniques, we derive the necessary and sufficient conditions 
for the optimal control, where a Hamiltonian system plays a crucial role. Moreover, to construct the optimal control, we introduce a matrix-valued differential equation and a BSDE with filtering, and establish their solvability under the assumption that the cost functional is uniformly convex.
Finally, we present explicit forms of the optimal control and value function.\vspace{2mm}

\noindent{\bf Keywords:}\quad indefinite; linear-quadratic optimal control; backward stochastic differential equation; partial information; Hamiltonian system
\vspace{2mm}

\noindent{\bf Mathematics Subject Classification:}\quad 93E20, 60H10, 49N10 

\section{Introduction}
The LQ optimal control problem is of great importance in both theory and practice. 
Compared with general control problems, it has a more concise form, making it easier to obtain favorable results.
The control system can be either deterministic (see \cite{LQ1}) or stochastic (see \cite{LQ2,LQ3}). 
The key to LQ problem is to obtain the feedback form of the optimal control, which often involves a specific type of ordinary differential equation (ODE) known as Riccati equation.
Readers may refer to Yong and Zhou~\cite{LQ4} for more details on the forward stochastic linear quadratic (FSLQ) control problem.
With the development of BSDE theory, research on control problems involving BSDEs has become increasingly prevalent (see, e.g., \cite{BLQ2, BLQ3, BLQ4}).
As for the backward stochastic linear quadratic (BSLQ) optimal control problem, it was first solved by Lim and Zhou~\cite{BLQ5}. 
They provided a feedback form of the optimal control under the conditions that the coefficients are deterministic and positive semi-definite.
Li et al.~\cite{BLQ6} extended the BSLQ problem to the mean-field case, while Sun and Wang~\cite{BLQ7} considered the case with stochastic coefficients.

In reality, it is often not possible to observe the complete information of the system. For instance, in financial markets, investors may not know exactly all the factors that affect asset prices. 
Such control problems are referred to as stochastic optimal control problems with incomplete information, which usually consist of two components: filtering and control. As Wang et al.~\cite{PLQ10} noted, incomplete information can 
generally be classified into two types: partial observation and partial information. In the case of partial observation, the available information is characterized by the filtration generated by the observation process, 
and in general, the observation process often depends on the control process. However, under partial information, the available information is a subfiltration of the complete information, which is abstract and does not depend on the control process. 
Nagai and Peng~\cite{PLQ1}, Xiong and Zhou~\cite{PLQ2} investigated portfolio optimization problems under incomplete information.
Hu and Øksendal~\cite{PLQ3} studied a stochastic LQ optimal control problem with jumps under incomplete information.
Meng~\cite{PLQ4} obtained a maximum principle and a verification theorem for a incomplete information stochastic optimal control, the controlled system of which is
a fully coupled nonlinear forward-backward stochastic differential equation (FBSDE).
Wang et al.~\cite{PLQ5} researched a linear FBSDE system with incomplete information. Through the combination of a backward separation approach, classical variational method, and stochastic filtering,
they derived two optimality conditions and an explicit representation of the optimal control.
Huang et al.~\cite{PLQ6} explored backward mean-field LQ games with both complete and incomplete information.
Huang et al.~\cite{PLQ7} and Wang et al.~\cite{PLQ8} investigated BSLQ problems under partial information and obtained the feedback representations. Yang et al.~\cite{PLQ9} studied a mean-field stochastic LQ problem with jumps under incomplete information.
For further understanding of incomplete information control problems, interested readers may refer to Wang et al.~\cite{PLQ10}.

It should be noted that the aforementioned literature concerning LQ problems generally assumes the nonnegative definiteness of the weighting matrices in the cost functional.
In this paper, we are interested in how the indefiniteness of weighting matrices influences the BSLQ problem with partial information.
The study of indefinite LQ problems can be traced back to Peng \cite{P92} and Chen et al.~\cite{ILQ1}, who pointed out that the nonnegative definiteness of the weighting matrices is not a necessary requirement in stochastic LQ problem.
Subsequently, Ait Rami et al.~\cite{ILQ2} studied an indefinite FSLQ problem
and proposed a generalized Riccati equation. They revealed that the solvability of the generalized Riccati equation is not only sufficient
but also necessary for the well-posedness of the indefinite FSLQ problem and the existence of the optimal control.
Ni et al.~\cite{ILQ3, ILQ4} studied indefinite mean-field FSLQ problems in discrete case, including both finite and infinite time horizons.
Sun et al.~\cite{ILQ5, ILQ6}, using the uniform convexity of the cost functional, investigated the open-loop solvability of indefinite mean-field FSLQ problems and indefinite BSLQ problems, respectively.
However, there has been little research on indefinite partial information LQ problems.
In a recent paper, Li et al.~\cite{ILQ7} explored the weak closed-loop solvability of indefinite FSLQ problems with partial information. 
Unlike the complete information case, they introduced two Riccati equations and, using
a perturbation approach, obtained the open-loop solvability of the problem. By contrast, the structures of backward systems are fundamentally different from those of forward systems. To the best of our knowledge, research on indefinite  BSLQ problems with partial information is lacking, and we aim to fill this gap.

In this study, we focus on the partial information BSLQ problem with indefinite cost weighting matrices. Our main contributions and differences from the existing literature can be summarized as follows.


(1) We establish the solvability of the BSLQ optimal control problem with indefinite cost weighting matrices under partial information. 
The first challenge posed by indefiniteness is that we cannot be certain whether a solution to the control problem exists. 
To overcome this challenge, inspired by \cite{ILQ6}, we assume that the cost functional is uniformly convex.
Unlike \cite{PLQ7, PLQ8}, which require positive semi-definiteness of the weighting matrices, we remove this restriction, substantially increasing analytical difficulty but enhancing practical significance. 
Moreover, in contrast to the work in \cite{ILQ7} on indefinite FSLQ problems with partial information, we focus on backward systems, whose structure is fundamentally different from that of forward problems, and therefore different analytical techniques are required.


(2) We extend the connection between the BSLQ problem and its corresponding FSLQ problem under complete information, 
established in \cite{BLQ5} and \cite{ILQ6}, to the framework of partial information. 
This connection is essential for proving the solvability of the matrix-valued differential equation introduced in the construction of the optimal control.

(3) With the help of the Lyapunov and Riccati equations introduced in the forward problem with partial information,
we establish the solvability of the matrix-valued differential equation for the backward problem through a limiting procedure, 
thereby constructing an explicit form of the optimal control. Compared with \cite{ILQ7}, 
our stochastic Hamiltonian system is formulated as an FBSDE that is coupled at the initial condition and incorporates filtering, 
which makes the associated matrix-valued differential equation more intricate and harder to solve. 
Unlike \cite{ILQ6}, under the framework of partial information, 
the construction of the optimal control additionally requires a Lyapunov equation and a BSDE with filtering.

(4) The cross terms of state $Y$ with $Z_1$, $Z_2$, and control $u$ are included in the cost functional (see \eqref{BSLQ:CF}).
In previous studies on partial information BSLQ problems with positive semi-definite weighting matrices, for example \cite{PLQ7, PLQ8}, cross terms are generally not included.
The presence of cross terms increases the generality of the problem 
while introducing additional complexity, especially in the process of constructing the optimal control. 

The rest of this paper is organized as follows. In Section~\ref{sect2}, the indefinite BSLQ problem with partial information is formulated.
In Section~\ref{sec3}, we first explore the relationship between the BSLQ problem and its corresponding forward problem. 
We then apply a simplification method to the cost functional and construct a Hamiltonian system.
To decouple the system, we introduce a matrix-valued differential equation and a BSDE with filtering.
After that, we provide explicit forms of the optimal control and value function. Section~\ref{sec4} concludes the paper.

\section{Problem Formulation} \label{sect2}
Throughout this paper, let $\mathbb{R}^n$ denote the $n$-dimensional
Euclidean space and $\mathbb{R}^{m\times n}$ denote the the space of all ($m\times n$) matrices,
equipped with the inner product $\langle M,\, N\rangle = \mbox{tr}\,(M^{\top}N)$ and the induced norm $|M| = \sqrt{\mbox{tr}\,(M^{\top}M)}$, where the superscript $\top$ denotes the transpose of vectors or matrices.
When there is no ambiguity, we also use $\langle \cdot,\, \cdot \rangle$ to denote inner products in other spaces.
In particular, we use $\mathbb{S}^n$ to denote the space of ($n \times n$) symmetric matrices, 
and $\mathbb{S}_{+}^n$ (resp., $\widehat{\mathbb{S}}_{+}^n$) to represent the set of ($n \times n$) positive semi-definite (resp., positive definite) symmetric matrices.
$I_n$ denotes the ($n \times n$) identity matrix. We often omit the index $n$ when it is clear from the context.  
For matrices $M, N\in\mathbb{S}^n$, we write $M \geqslant N$ (resp., $M > N$) if $M - N$ is positive semi-definite (resp., positive definite). 
Let $T>0$ be a fixed time horizon. For a mapping $S: [0,T]\rightarrow\mathbb{S}^n$, we denote $S(\cdot) \geq 0$ (resp., $>0$) if $S(t) \geq 0$ (resp., $>0$) for all $t\in[0,T]$, and $S(\cdot) \gg 0$ if there exists a constant $\delta > 0$ such that $S(t) \geqslant \delta I_n$ for all $t\in[0,T]$.

Let $(\Omega, \mathcal{F}, \mathbb{F}, \mathbb{P})$ be a complete filtered probability space on which a two-dimensional
standard Brownian motion $(W_1(\cdot), W_2(\cdot))^{\top}$ is defined. $\mathbb{F} = \{\mathcal{F}_t\}_{t \in [0,T]}$ is the natural filtration of $W_1(\cdot)$ and $W_2(\cdot)$ augmented by all 
$\mathbb{P}$-null sets in $\mathcal{F}$, where $\mathcal{F}=\mathcal{F}_T$.
Let $\mathbb{G} = \{\mathcal{G}_t\}_{t\in [0,T]}$ be the natural filtration of $W_2(\cdot)$ augmented by all $\mathbb{P}$-null sets in $\mathcal{G}$, where $\mathcal{G}=\mathcal{G}_T$.
In this paper, let $\mathcal{G}_t$ represent the information available at time $t$. Obviously, $\mathbb{G}$ is a subfiltration of $\mathbb{F}$.
For any Banach space $\mathbb{H}$, we adopt the following notations:
\[
	\begin{aligned}
		& C([0,T]; \mathbb{H}) = \left\{f: [0,T] \rightarrow \mathbb{H} \mid f \text { is continuous}\right\}, \\
		& L^{\infty}(0,T; \mathbb{H}) = \left\{f: [0,T] \rightarrow \mathbb{H} \mid f \text { is Lebesgue measurable and essentially bounded}\right\}, \\
		& L_{\mathcal{F}_T}^2(\Omega; \mathbb{H}) = \left\{\xi: \Omega \rightarrow \mathbb{H} \mid \xi \text { is } \mathcal{F}_T \text {-measurable, } \mathbb{E}|\xi|^2<\infty\right\}, \\
		& L_{\mathbb{F}}^2(0, T; \mathbb{H}) = \left\{f:[0, T] \times \Omega \rightarrow \mathbb{H} \mid f \text { is } \mathbb{F} \text {-progressively measurable, } \mathbb{E} \int_0^T|f(t)|^2\, \mathrm{d}t<\infty\right\}, \\
		& S_{\mathbb{F}}^2(0, T; \mathbb{H})  = \left\{f:[0, T] \times \Omega \rightarrow \mathbb{H} \mid f \text { is } \mathbb{F} \text {-progressively measurable, continuous,} \right. \\
&\qquad\qquad\qquad\left. \mathbb{E}\left[\sup _{0 \leq t \leq T}|f(t)|^2\right]<\infty\right\}.
	\end{aligned}
\]
The space $L_{\mathbb{G}}^2(0, T; \mathbb{H})$ and $S_{\mathbb{G}}^2(0, T; \mathbb{H})$ can be defined in a similar manner.
Moreover, for any $\mathbb{F}$-progressively measurable stochastic process $f(\cdot)$, let 
\[
  \widehat{f}(t) = \mathbb{E}[f(t)\mid\mathcal{G}_t]
\]
denote the optimal filter with respect to $\mathcal{G}_t$ for any $t\in [0,T]$.

Now consider the BSDE
\begin{equation} \label{BSLQ:SE}
	\begin{cases}
		\begin{aligned}
		\mathrm{d}Y(t) &= 
		\big[ A(t)Y(t) + B(t)u(t) + C_{1}(t)Z_{1}(t) + C_{2}(t)Z_{2}(t) \big]\, \mathrm{d}t \\
		&\quad + Z_{1}(t)\, \mathrm{d}W_{1}(t) + Z_{2}(t)\, \mathrm{d}W_{2}(t), \\
		Y(T) &= \xi ,
		\end{aligned}
	\end{cases}
\end{equation}
where $u(\cdot)$ is the control process; $A(\cdot)$, $B(\cdot)$, $C_1(\cdot)$, $C_2(\cdot)$ are deterministic matrix-valued functions of proper dimensions;
$\xi\in L_{\mathcal{F}_T}^2(\Omega; \mathbb{R}^n)$ is the terminal state. The admissible control set is
\[
	\mathcal{U}_{ad} = L_{\mathbb{G}}^2(0, T; \mathbb{R}^m).
\]
Any $u(\cdot)\in \mathcal{U}_{ad}$ is called an admissible control.

\bigskip

\noindent{\bf Hypothesis (H1)} \ The coefficients of the state equation~\eqref{BSLQ:SE} satisfy the following:
\[
\begin{cases}
	A(\cdot) \in L^\infty\left(0, T ; \mathbb{R}^{n \times n}\right), \\
	B(\cdot) \in L^\infty\left(0, T ; \mathbb{R}^{n \times m}\right), \\
	C_{1}(\cdot), C_{2}(\cdot) \in L^\infty\left(0, T ; \mathbb{R}^{n \times n}\right).
\end{cases}
\]
By the classical results of BSDEs (see \cite{BLQ1}), under (H1), for any $\xi\in L_{\mathcal{F}_T}^2(\Omega; \mathbb{R}^n)$ and $u(\cdot) \in \mathcal{U}_{ad}$, 
the state equation~\eqref{BSLQ:SE} admits a unique solution $(Y(\cdot),Z_1(\cdot),Z_2(\cdot))\in S_{\mathbb{F}}^2(0, T; \mathbb{R}^n)\times L_{\mathbb{F}}^2(0, T; \mathbb{R}^n)\times L_{\mathbb{F}}^2(0, T; \mathbb{R}^n)$, 
which is called the state process corresponding to the control $u(\cdot)$.

We then introduce the following cost functional
\begin{equation} \label{BSLQ:CF}
	\begin{aligned}
	J(\xi; u(\cdot)) = &\ \mathbb{E} \bigg\{
		\big\langle GY(0),\, Y(0) \big\rangle
		+\int_0^T 
		\Big[
		\big\langle Q(t)Y(t),\, Y(t) \big\rangle
		+2\big\langle S_1(t)Y(t),\, Z_1(t) \big\rangle \\
		&+2\big\langle S_2(t)Y(t),\, Z_2(t) \big\rangle
		+2\big\langle S_3(t)Y(t),\, u(t) \big\rangle 
		+\big\langle N_1(t) Z_1(t),\, Z_1(t)\big\rangle \\
		&+\big\langle N_2(t) Z_2(t),\, Z_2(t) \big\rangle
		+\big\langle R(t)u(t),\, u(t) \big\rangle
		\Big]
		\,\mathrm{d}t
	\bigg\}.
	\end{aligned}
\end{equation}

\noindent{\bf Hypothesis (H2)} \ 
The weighting matrices in the cost functional \eqref{BSLQ:CF} satisfy the following:
\[
\begin{cases}
	G \in \mathbb{S}^n,\quad Q(\cdot), N_1(\cdot), N_2(\cdot) \in L^\infty\left(0, T ; \mathbb{S}^{n}\right), \quad R(\cdot) \in L^\infty\left(0, T ; \mathbb{S}^{m}\right),\\
	S_1(\cdot), S_2(\cdot) \in L^\infty\big(0, T ; \mathbb{R}^{n \times n}\big),\quad S_3(\cdot) \in L^\infty\big(0, T ; \mathbb{R}^{m \times n}\big).
\end{cases}
\]
Under (H1) and (H2), for any $\xi\in L_{\mathcal{F}_T}^2(\Omega; \mathbb{R}^n)$ and $u(\cdot) \in \mathcal{U}_{ad}$, the cost functional~\eqref{BSLQ:CF} is well-defined.
Assumptions (H1) and (H2) impose boundedness on the coefficients, which will be frequently used in the subsequent proofs.
It should be noted that, in our indefinite control problem, the coefficients in the cost functional are not necessarily positive semi-definite.
Our BSLQ control problem with partial information can be stated as follows. 

\bigskip

\noindent{\bf Problem (BSLQ-P)} \
	For a given terminal state $\xi\in L_{\mathcal{F}_T}^2(\Omega; \mathbb{R}^n)$, find a control $u^*(\cdot)\in\mathcal{U}_{ad}$ such that
	\begin{equation} \label{BSLQ:target}
	J(\xi ; u^*(\cdot)) = \inf_{u(\cdot) \in \mathcal{U}_{ad}} J(\xi ; u(\cdot)) =: V(\xi).
	\end{equation}	
Any $u^*(\cdot) \in \mathcal{U}_{ad}$ satisfying \eqref{BSLQ:target} is called an optimal control, and the corresponding $\linebreak(Y^*(\cdot),Z_1^*(\cdot),Z_2^*(\cdot))$
is called an optimal state. $V(\xi)$ is called the value function of Problem (BSLQ-P).

The above boundedness assumption alone is not enough for solving the problem. 
We need to impose slightly stronger conditions.
Sun et al.~\cite{ILQ6} studied a BSLQ problem with complete information from a Hilbert space point of view.
They found that, as long as the optimal control exists, one can use a limiting procedure to approach it,
where the key is to solve the control problem under the uniform convexity condition.
In fact, the case under partial information is similar. The main difference lies in the change of the admissible control set.
Interested readers may refer to \cite[Section~3]{ILQ6} for more information.
We now introduce the third assumption. 
\bigskip

\noindent{\bf Hypothesis (H3)} \ There exists a $\delta > 0$ such that 
\[
	J(0;u(\cdot)) \geqslant \delta\, \mathbb{E} \int_{0}^{T}\left\lvert u(t)\right\rvert^2\, \mathrm{d}t, \quad \text{for all } u(\cdot) \in \mathcal{U}_{ad}.
\]
This assumption is called the uniform convexity condition of the cost functional.
As will be seen in the subsequent analysis, it guarantees the existence and uniqueness of the optimal control for Problem (BSLQ-P)
and plays a key role in exploring the connection between backward and forward problems in Section~\ref{sec3.1}.

At the end of this section, we present an illustrative example to show that the uniform convexity condition of the cost functional allows the weighting matrices to be indefinite.
\begin{Example}\label{Exa}
Consider a control problem with the following one-dimensional state equation
\[
	\begin{cases}
		\begin{aligned}
		\mathrm{d}Y(t) &= 
		\big[Y(t) + u(t)\big]\, \mathrm{d}t + Z_{1}(t)\, \mathrm{d}W_{1}(t) + Z_{2}(t)\, \mathrm{d}W_{2}(t), \\
		Y(1) &= \xi
		\end{aligned}
	\end{cases}
\]
and the cost functional
\[
	\begin{aligned}
	J(\xi; u(\cdot)) = &\ \mathbb{E}\left\{-|Y(0)|^2+
		\int_0^1 
		\big[
		5|u(t)|^2 -|Y(t)|^2- |Z_1(t)|^2 - |Z_2(t)|^2
		\big]
		\,\mathrm{d}t\right\}.
	\end{aligned}
\]
Obviously, the weighting matrices are not all nonnegative. We now verify that the cost functional satisfies (H3).

In fact, setting $\xi=0$ and applying It\^o's formula to $|Y(\cdot)|^2$, we obtain
\[
\begin{aligned}
&\mathbb E\left\{|Y(0)|^2+\int_0^1\big[2|Y(t)|^2+|Z_1(t)|^2+|Z_2(t)|^2\big]\, \mathrm dt\right\}=-\mathbb E\int_0^12Y(t)u(t)\, \mathrm dt\\
&\leq \mathbb E\int_0^12|Y(t)||u(t)|\, \mathrm dt \leq \mathbb E\int_0^1\big[|Y(t)|^2+|u(t)|^2\big]\, \mathrm dt,
\end{aligned}
\]
which immediately yields
\[
\begin{aligned}
&\mathbb E\left\{|Y(0)|^2+\int_0^1\big[|Y(t)|^2+|Z_1(t)|^2+|Z_2(t)|^2\big]\, \mathrm dt\right\} \leq \mathbb E\int_0^1|u(t)|^2\, \mathrm dt.
\end{aligned}
\]
Thus, we have
\[
J(0;u(\cdot)) \geq 4\mathbb E\int_0^1|u(t)|^2\,\mathrm dt, \quad \text{for all } u(\cdot) \in \mathcal{U}_{ad},
\]
which implies that the uniform convexity condition holds.
\end{Example}

\section{Main Results} \label{sec3}
\subsection{Connection with FSLQ problems with partial information} \label{sec3.1}
Before proceeding further with Problem (BSLQ-P), we first examine the forward case.
This section focuses on the relationship between the backward and forward problems under assumption (H3). From the analysis, we derive some useful results that not only reveal properties of the weighting coefficients in Problem (BSLQ-P),
but also play a crucial role in proving the unique solvability of the matrix-valued differential equation in subsequent sections.

Consider the stochastic differential equation (SDE)
\[
	\begin{cases}
	\begin{aligned}
		\mathrm{d}\mathcal{X}(t) &= 
		\big[ \mathcal{A}(t)\mathcal{X}(t) + \mathcal{B}(t)v(t) \big]\, \mathrm{d}t
		+\big[ \mathcal{C}_1(t)\mathcal{X}(t) + \mathcal{D}_1(t)v(t) \big]\, \mathrm{d}W_1(t) \\
  		&\quad+\big[ \mathcal{C}_2(t)\mathcal{X}(t) + \mathcal{D}_2(t)v(t) \big]\, \mathrm{d}W_2(t),\\
		\mathcal{X}(0) &= x,
	\end{aligned}
	\end{cases}
\]
and the cost functional
\[
	\begin{aligned}
		\mathcal{J}(x; v(\cdot)) &=  \mathbb{E} \bigg\{
		\big\langle \mathcal{H}\mathcal{X}(T),\, \mathcal{X}(T) \big\rangle 
		+\int_0^T \Big[\big\langle \mathcal{Q}(t)\mathcal{X}(t), \, \mathcal{X}(t) \big\rangle \\
		&\quad + 2\big\langle \mathcal{S}(t)\mathcal{X}(t), \, v(t) \big\rangle 
		+ \big\langle \mathcal{R}(t)v(t), \, v(t) \big\rangle\Big]\,
		\mathrm{d}t
		\bigg\},
	\end{aligned}
\]
where the cofficients satisfy
\[	
	\begin{cases}
	\mathcal{A}(\cdot),\mathcal{C}_{1}(\cdot), \mathcal{C}_{2}(\cdot) \in L^\infty\left(0, T ; \mathbb{R}^{n \times n}\right), \\
	\mathcal{B}(\cdot), \mathcal{D}_1(\cdot), \mathcal{D}_2(\cdot) \in L^\infty\left(0, T ; \mathbb{R}^{n \times m}\right), \\
	\mathcal{H} \in \mathbb{S}^n, \quad \mathcal{Q}(\cdot) \in L^\infty\left(0, T ; \mathbb{S}^{n}\right), \\
	\mathcal{S}(\cdot) \in L^\infty\big(0, T ; \mathbb{R}^{m \times n}\big), \quad \mathcal{R}(\cdot) \in L^\infty\left(0, T ; \mathbb{S}^m\right).
	\end{cases}
\]
The FSLQ optimal control problem with partial information is formulated as follows. 

\bigskip

\noindent{\bf Problem (FSLQ-P)} \
For a given initial state $x \in \mathbb{R}^n$, find a control $v^*(\cdot) \in \mathcal{U}_{ad}$ such that
\[
	\mathcal{J}(x ; v^*(\cdot)) = \inf_{v(\cdot) \in \mathcal{U}_{ad}} \mathcal{J}(x;v(\cdot)) =: \mathcal{V}(x).
\]

In the rest of this paper, we may suppress $t$ for the simplicity of notations when no confusion arises.
There are a Lyapunov equation and a Riccati equation closely related to Problem (FSLQ-P):
\begin{equation} \label{FSLQ:RE1}
	\begin{cases}
	\dot{\mathcal{P}_1} + \mathcal{P}_1\mathcal{A} + \mathcal{A}^{\top}\mathcal{P}_1 + \mathcal{C}_1^{\top}\mathcal{P}_1\mathcal{C}_1 + \mathcal{C}_2^{\top}\mathcal{P}_1\mathcal{C}_2 + \mathcal{Q}  = 0,\\
	\mathcal{P}_1(T) = \mathcal{H},
	\end{cases}
\end{equation}
\begin{equation} \label{FSLQ:RE2}
	\begin{cases}
	\dot{\mathcal{P}_2} + \mathcal{P}_2\mathcal{A} + \mathcal{A}^{\top}\mathcal{P}_2 + \mathcal{C}_1^{\top}\mathcal{P}_1\mathcal{C}_1 + \mathcal{C}_2^{\top}\mathcal{P}_2\mathcal{C}_2 \\
	\quad \,\, -\left(\mathcal{B}^{\top}\mathcal{P}_2 + \mathcal{D}_1^{\top}\mathcal{P}_1\mathcal{C}_1 + \mathcal{D}_2^{\top}\mathcal{P}_2\mathcal{C}_2 + \mathcal{S}\right)^{\top}\left(\mathcal{R} + \mathcal{D}_1^{\top}\mathcal{P}_1\mathcal{D}_1 + \mathcal{D}_2^{\top}\mathcal{P}_2\mathcal{D}_2\right)^{-1} \\
	\quad \,\, \times \left(\mathcal{B}^{\top}\mathcal{P}_2 + \mathcal{D}_1^{\top}\mathcal{P}_1\mathcal{C}_1 + \mathcal{D}_2^{\top}\mathcal{P}_2\mathcal{C}_2 + \mathcal{S}\right) + \mathcal{Q} = 0, \\
	\mathcal{P}_2(T) = \mathcal{H},
	\end{cases}
\end{equation}
where the invertibility of $\mathcal{R} + \mathcal{D}_1^{\top}\mathcal{P}_1\mathcal{D}_1 + \mathcal{D}_2^{\top}\mathcal{P}_2\mathcal{D}_2$ will be established in the following Lemma \ref{lemma:convexity}.

The following lemmas ensure the solvability of the two equations and Problem (FSLQ-P). These results have been established in the existing literature (see, e.g., \cite{ILQ6,ILQ7}).

\begin{Lemma} \label{lemma:Req1}
Lyapunov equation~\eqref{FSLQ:RE1} admits a unique solution $\mathcal{P}_1(\cdot) \in C\left([0,T];\mathbb{S}^n\right)$.
In addition, if $\mathcal{H} \geqslant 0$ (resp., $\mathcal{H}>0$) and $\mathcal{Q}(\cdot) \geqslant 0$, then $\mathcal{P}_1(\cdot) \geqslant 0$ (resp., $\mathcal{P}_1(\cdot) > 0$).
\end{Lemma}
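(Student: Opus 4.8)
The plan is to treat \eqref{FSLQ:RE1} as a terminal-value problem for a linear matrix ODE and then to read off the sign properties from a probabilistic representation formula. First I would rewrite the equation in the form
\[
  \dot{\mathcal{P}}_1(t) = -\big[\mathcal{P}_1(t)\mathcal{A}(t) + \mathcal{A}(t)^{\top}\mathcal{P}_1(t) + \mathcal{C}_1(t)^{\top}\mathcal{P}_1(t)\mathcal{C}_1(t) + \mathcal{C}_2(t)^{\top}\mathcal{P}_1(t)\mathcal{C}_2(t) + \mathcal{Q}(t)\big], \quad \mathcal{P}_1(T) = \mathcal{H}.
\]
The right-hand side is affine in $\mathcal{P}_1$, and the coefficient functions $\mathcal{A}, \mathcal{C}_1, \mathcal{C}_2, \mathcal{Q}$ belong to $L^\infty(0,T)$, hence are integrable on $[0,T]$; the classical existence and uniqueness theorem for linear ODEs under Carath\'eodory-type conditions then yields a unique absolutely continuous solution $\mathcal{P}_1(\cdot)$ on $[0,T]$, and essential boundedness of the coefficients upgrades absolute continuity to Lipschitz continuity, so $\mathcal{P}_1(\cdot) \in C([0,T];\mathbb{R}^{n\times n})$. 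To see that $\mathcal{P}_1(\cdot)$ is $\mathbb{S}^n$-valued, observe that $\mathcal{P}_1(\cdot)^{\top}$ solves the very same terminal-value problem, since $\mathcal{H}$ and $\mathcal{Q}(\cdot)$ are symmetric and each quadratic term $\mathcal{C}_i^{\top}\mathcal{P}_1\mathcal{C}_i$ is invariant under transposition; uniqueness then forces $\mathcal{P}_1 = \mathcal{P}_1^{\top}$, so $\mathcal{P}_1(\cdot) \in C([0,T];\mathbb{S}^n)$.

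For the sign assertions, fix $s \in [0,T]$ and $x \in \mathbb{R}^n$, and introduce on $[s,T]$ the uncontrolled linear SDE
\[
  \mathrm{d}X(t) = \mathcal{A}(t)X(t)\,\mathrm{d}t + \mathcal{C}_1(t)X(t)\,\mathrm{d}W_1(t) + \mathcal{C}_2(t)X(t)\,\mathrm{d}W_2(t), \qquad X(s) = x,
\]
which, under the boundedness of the coefficients, admits a unique solution $X(\cdot) \in S_{\mathbb{F}}^2(s,T;\mathbb{R}^n)$. Applying It\^o's formula to $t \mapsto \langle \mathcal{P}_1(t)X(t),\, X(t)\rangle$ and using \eqref{FSLQ:RE1} to cancel every drift term except $-\langle \mathcal{Q}(t)X(t),\, X(t)\rangle$, then taking expectations (the stochastic integrals being true martingales by the standard $S^p$-estimates for linear SDEs), I arrive at the representation
\[
  \big\langle \mathcal{P}_1(s)x,\, x\big\rangle = \mathbb{E}\big\langle \mathcal{H}X(T),\, X(T)\big\rangle + \mathbb{E}\int_s^T \big\langle \mathcal{Q}(t)X(t),\, X(t)\big\rangle\,\mathrm{d}t.
\]
If $\mathcal{H} \geqslant 0$ and $\mathcal{Q}(\cdot) \geqslant 0$, the right-hand side is nonnegative for every $x$, whence $\mathcal{P}_1(s) \geqslant 0$; since $s$ is arbitrary, $\mathcal{P}_1(\cdot) \geqslant 0$.

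For the strict statement, suppose $\mathcal{H} > 0$ and $\mathcal{Q}(\cdot) \geqslant 0$; it then suffices to bound $\mathbb{E}|X(T)|^2$ below by a positive multiple of $|x|^2$. This follows from a Gronwall argument: writing $f(t) = \mathbb{E}|X(t)|^2$, It\^o's formula gives $f'(t) = \mathbb{E}\big[2\langle \mathcal{A}(t)X(t),\, X(t)\rangle + |\mathcal{C}_1(t)X(t)|^2 + |\mathcal{C}_2(t)X(t)|^2\big] \geqslant -K f(t)$, with $K$ depending only on the $L^\infty$-norms of $\mathcal{A}, \mathcal{C}_1, \mathcal{C}_2$, so that $t \mapsto e^{Kt}f(t)$ is nondecreasing and hence $\mathbb{E}|X(T)|^2 \geqslant e^{-K(T-s)}|x|^2$. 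Combining this with $\mathbb{E}\langle \mathcal{H}X(T),\, X(T)\rangle \geqslant \lambda_{\min}(\mathcal{H})\,\mathbb{E}|X(T)|^2$ and the nonnegativity of the $\mathcal{Q}$-term in the representation yields $\langle \mathcal{P}_1(s)x,\, x\rangle > 0$ for every $x \neq 0$, i.e., $\mathcal{P}_1(\cdot) > 0$. I do not anticipate a genuine obstacle, as this is a classical fact; the only points requiring a little care are the drift cancellation in the It\^o expansion — which relies precisely on the symmetric Lyapunov structure of \eqref{FSLQ:RE1} — and, in the strict case, the reverse Gronwall lower bound for $\mathbb{E}|X(T)|^2$. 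Alternatively, one could simply invoke the corresponding statements already recorded in \cite{ILQ6, ILQ7}.
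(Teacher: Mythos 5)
Your argument is correct, but note that the paper itself does not prove Lemma~\ref{lemma:Req1} at all: it simply records it as a known fact and refers to the literature (\cite{ILQ6,ILQ7}), so there is no in-paper proof to compare against step by step. What you supply is a self-contained version of the standard argument: existence, uniqueness and continuity from linear ODE theory under Carath\'eodory conditions (the right-hand side is affine in $\mathcal{P}_1$ with $L^\infty$ coefficients, hence globally Lipschitz), symmetry by the transposition-plus-uniqueness trick, and the sign assertions via the probabilistic representation
\[
\big\langle \mathcal{P}_1(s)x,\, x\big\rangle = \mathbb{E}\big\langle \mathcal{H}X(T),\, X(T)\big\rangle + \mathbb{E}\int_s^T \big\langle \mathcal{Q}(t)X(t),\, X(t)\big\rangle\,\mathrm{d}t
\]
obtained from It\^o's formula along the uncontrolled linear SDE, with the reverse Gronwall bound $\mathbb{E}|X(T)|^2 \geqslant e^{-K(T-s)}|x|^2$ handling the strict case. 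All the delicate points are treated correctly: the drift cancellation does use the already-established symmetry of $\mathcal{P}_1$ (to write $2\langle \mathcal{P}_1 X, \mathcal{A}X\rangle = \langle(\mathcal{P}_1\mathcal{A}+\mathcal{A}^{\top}\mathcal{P}_1)X, X\rangle$), and the true-martingale claim for the stochastic integrals is justified because linear SDEs with bounded deterministic coefficients and deterministic initial data have moments of every order (alternatively, a localization argument would do). One could also give a purely deterministic proof of the sign properties by a comparison/variation-of-constants argument for the linear matrix ODE, but your probabilistic route is the one typically used in the cited references and fits naturally with the rest of the paper.
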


\begin{Lemma} \label{lemma:convexity}
Suppose that there exists a constant $\alpha > 0$ such that
\begin{equation} \label{FSLQ:UC}
	\mathcal{J}(0;v(\cdot)) \geqslant \alpha\, \mathbb{E}\int_{0}^{T}\big|v(t)\big|^2\, \mathrm{d}t, \quad \text{for all } v(\cdot) \in \mathcal{U}_{ad}.
\end{equation}
Then Riccati equation~\eqref{FSLQ:RE2} admits a unique solution $\mathcal{P}_2(\cdot) \in C([0,T]; \mathbb{S}^n)$ such that
\[
\mathcal{R} + \mathcal{D}_1^{\top}\mathcal{P}_1\mathcal{D}_1 + \mathcal{D}_2^{\top}\mathcal{P}_2\mathcal{D}_2 \gg 0.
\]
Let
\[
\varTheta^* = -\left(\mathcal{R} + \mathcal{D}_1^{\top}\mathcal{P}_1\mathcal{D}_1 + \mathcal{D}_2^{\top}\mathcal{P}_2\mathcal{D}_2 \right)^{-1}
\left(\mathcal{B}^{\top}\mathcal{P}_2 + \mathcal{D}_1^{\top}\mathcal{P}_1\mathcal{C}_1 + \mathcal{D}_2^{\top}\mathcal{P}_2\mathcal{C}_2 + \mathcal{S}\right),
\]
then Problem (FSLQ-P) has a unique optimal control
\[
v^*(\cdot) = \varTheta^*(\cdot)\widehat{\mathcal{X}}^*(\cdot),
\]
which is a linear feedback of the state filtering estimation.
$\mathcal{P}_1(\cdot)$, $\mathcal{P}_2(\cdot)$ are solutions to Lyapunov equation \eqref{FSLQ:RE1} and Riccati equation \eqref{FSLQ:RE2}, respectively, and the filtering estimate $\widehat{\mathcal{X}}^*(\cdot)$ satisfies
\[
\begin{cases}
	\begin{aligned}
	\mathrm{d}\widehat{\mathcal{X}}^*(t) &= [\mathcal{A}(t)\widehat{\mathcal{X}}^*(t) + \mathcal{B}(t)v^*(t)]\, \mathrm{d}t + [\mathcal{C}_2(t)\widehat{\mathcal{X}}^*(t) + \mathcal{D}_2(t)v^*(t)]\, \mathrm{d}W_2(t),\\ 
	\widehat{\mathcal{X}}^*(0) &= x.
	\end{aligned}
\end{cases}
\]
Moreover, the value function $\mathcal{V}(\cdot)$ is given by
\begin{equation}
	\mathcal{V}(x) = \langle \mathcal{P}_2(0)x,\ x\rangle, \quad \text{for any }x \in \mathbb R^n.
\end{equation}
\end{Lemma}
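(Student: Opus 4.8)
The plan is to treat Problem (FSLQ-P) by the classical Hilbert-space and completion-of-squares method, handling the partial information through the decomposition of the state into its $\mathbb{G}$-filter and the estimation error. First I would note that $v(\cdot)\mapsto\mathcal{J}(x;v(\cdot))$ is a continuous quadratic functional on $\mathcal{U}_{ad}=L^2_{\mathbb{G}}(0,T;\mathbb{R}^m)$ of the form $\mathcal{J}(0;v(\cdot))+2\langle\ell_x,v(\cdot)\rangle+c_x$ with $\ell_x$ linear in $x$, so \eqref{FSLQ:UC} makes it coercive and strictly convex; hence a unique optimal $v^*(\cdot)$ exists for each $x$, characterized by a stationarity condition. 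Computing the G\^ateaux derivative (equivalently, invoking the stochastic maximum principle with partial information) gives an optimality system consisting of the state SDE, an adjoint BSDE, and the stationarity relation that sets a combination of $\mathcal{R}v^*$ and the $\mathcal{G}_t$-conditional expectations of the adjoint processes to zero. Then, writing $\widehat{\mathcal{X}}^*$ for the $\mathbb{G}$-optimal filter of $\mathcal{X}^*$ (which solves the filtering SDE stated in the lemma) and $\widetilde{\mathcal{X}}^*=\mathcal{X}^*-\widehat{\mathcal{X}}^*$, I would seek the adjoint in feedback form $\mathcal{P}_1(t)\widetilde{\mathcal{X}}^*(t)+\mathcal{P}_2(t)\widehat{\mathcal{X}}^*(t)$; It\^o's formula together with the orthogonality $\mathbb{E}\langle M\widehat{\mathcal{X}}^*(t),\widetilde{\mathcal{X}}^*(t)\rangle=0$ for deterministic $M$ forces, upon matching coefficients, the Lyapunov equation~\eqref{FSLQ:RE1} for $\mathcal{P}_1$ (the unobserved direction $W_1$ is fed through $\mathcal{P}_1$) and the Riccati equation~\eqref{FSLQ:RE2} for $\mathcal{P}_2$ (the observed direction $W_2$ through $\mathcal{P}_2$), along with the candidate $\varTheta^*$.

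To establish solvability of~\eqref{FSLQ:RE2}, I would argue by local existence plus an a priori estimate. Since $\mathcal{P}_1(T)=\mathcal{P}_2(T)=\mathcal{H}$, testing~\eqref{FSLQ:UC} with controls concentrated on $[T-\varepsilon,T]$ gives $\mathcal{R}(T)+\mathcal{D}_1(T)^\top\mathcal{H}\mathcal{D}_1(T)+\mathcal{D}_2(T)^\top\mathcal{H}\mathcal{D}_2(T)\geqslant\alpha I$, so by continuity $\mathcal{R}+\mathcal{D}_1^\top\mathcal{P}_1\mathcal{D}_1+\mathcal{D}_2^\top\mathcal{P}_2\mathcal{D}_2$ is invertible near $T$ and~\eqref{FSLQ:RE2} has a local $\mathbb{S}^n$-valued solution there. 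On any subinterval $[\tau,T]$ of existence, a completion-of-squares computation (applying It\^o to $\langle\mathcal{P}_1\widetilde{\mathcal{X}},\widetilde{\mathcal{X}}\rangle+\langle\mathcal{P}_2\widehat{\mathcal{X}},\widehat{\mathcal{X}}\rangle$ along the state with $\mathcal{X}(\tau)=x$) yields $\mathcal{J}_\tau(x;v)=\langle\mathcal{P}_2(\tau)x,x\rangle+\mathbb{E}\int_\tau^T\langle(\mathcal{R}+\mathcal{D}_1^\top\mathcal{P}_1\mathcal{D}_1+\mathcal{D}_2^\top\mathcal{P}_2\mathcal{D}_2)(v-\varTheta^*\widehat{\mathcal{X}}),v-\varTheta^*\widehat{\mathcal{X}}\rangle\,\mathrm{d}s$; since~\eqref{FSLQ:UC} restricts to uniform convexity on $[\tau,T]$ (extend controls by zero), this forces $\mathcal{R}+\mathcal{D}_1^\top\mathcal{P}_1\mathcal{D}_1+\mathcal{D}_2^\top\mathcal{P}_2\mathcal{D}_2\geqslant\alpha I$ on $[\tau,T]$, and choosing $v=0$ and $v=2\varTheta^*\widehat{\mathcal{X}}$ yields two-sided bounds on $\langle\mathcal{P}_2(\tau)x,x\rangle$ uniform in $\tau$. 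The uniform ellipticity keeps the inverse in~\eqref{FSLQ:RE2} bounded and the uniform bound on $\mathcal{P}_2$ rules out finite-time blow-up, so the solution extends to $[0,T]$ with $\mathcal{R}+\mathcal{D}_1^\top\mathcal{P}_1\mathcal{D}_1+\mathcal{D}_2^\top\mathcal{P}_2\mathcal{D}_2\gg0$; uniqueness is immediate from the local Lipschitz dependence of the right-hand side on $\mathcal{P}_2$. Alternatively, one can add $\varepsilon|v|^2$ to the cost to obtain a standard positive-definite problem, solve the perturbed Riccati equation, and pass to the limit $\varepsilon\to0^+$ as in~\cite{ILQ7}.

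For the verification part, the completion-of-squares identity from the previous step, now on all of $[0,T]$, gives $\mathcal{J}(x;v(\cdot))\geqslant\langle\mathcal{P}_2(0)x,x\rangle$ with equality precisely when $v(s)=\varTheta^*(s)\widehat{\mathcal{X}}(s)$ for a.e.\ $s$; substituting this closed-loop condition produces the well-posed filtering SDE for $\widehat{\mathcal{X}}^*$ stated in the lemma, so the feedback control $v^*(\cdot)=\varTheta^*(\cdot)\widehat{\mathcal{X}}^*(\cdot)$ is well defined and is the unique optimal control, and $\mathcal{V}(x)=\langle\mathcal{P}_2(0)x,x\rangle$.

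I expect the main obstacle to be the a priori estimate in the second step: transferring the uniform convexity~\eqref{FSLQ:UC} from $[0,T]$ to the subintervals $[\tau,T]$ inside the partial-information framework (where the admissible set is $\mathbb{G}$-adapted, so both the ``extension by zero'' of controls and the bookkeeping of the error process $\widetilde{\mathcal{X}}$ in the completion of squares need care), and extracting from it the \emph{uniform} lower bound $\mathcal{R}+\mathcal{D}_1^\top\mathcal{P}_1\mathcal{D}_1+\mathcal{D}_2^\top\mathcal{P}_2\mathcal{D}_2\geqslant\alpha I$ that closes off the blow-up alternative. The remaining steps are the standard completion-of-squares bookkeeping once the two equations are correctly identified.
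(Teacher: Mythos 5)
The paper itself does not prove this lemma: it is imported as a known result, with the proof attributed to the literature on (indefinite, partial-information) forward LQ problems (\cite{ILQ6,ILQ7}). Your sketch reconstructs essentially that standard argument---decompose the state into the filter $\widehat{\mathcal X}$ and the error $\widetilde{\mathcal X}=\mathcal X-\widehat{\mathcal X}$, use the orthogonality $\mathbb E\langle M\widehat{\mathcal X},\widetilde{\mathcal X}\rangle=0$, and complete squares with $\langle\mathcal P_1\widetilde{\mathcal X},\widetilde{\mathcal X}\rangle+\langle\mathcal P_2\widehat{\mathcal X},\widehat{\mathcal X}\rangle$, which is exactly what produces the Lyapunov equation \eqref{FSLQ:RE1} and the $\mathcal P_1$-coupled Riccati equation \eqref{FSLQ:RE2} together with the feedback gain $\varTheta^*$; then obtain global solvability either by local existence near $T$ plus an a priori bound drawn from the uniform convexity \eqref{FSLQ:UC} restricted to $[\tau,T]$, or by the $\varepsilon$-perturbation route of \cite{ILQ7}. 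So in substance your proof coincides with the one the paper relies on, and the verification and uniqueness steps are standard once the representation $\mathcal J_\tau(x;v)=\langle\mathcal P_2(\tau)x,x\rangle+\mathbb E\int_\tau^T\langle(\mathcal R+\mathcal D_1^\top\mathcal P_1\mathcal D_1+\mathcal D_2^\top\mathcal P_2\mathcal D_2)(v-\varTheta^*\widehat{\mathcal X}),\,v-\varTheta^*\widehat{\mathcal X}\rangle\,\mathrm ds$ is in hand. Two points in your second step should be tightened: (i) since the coefficients are only $L^\infty$, the localization-in-time test gives $\mathcal R+\mathcal D_1^\top\mathcal H\mathcal D_1+\mathcal D_2^\top\mathcal H\mathcal D_2\geqslant\alpha I$ only for a.e.\ $t$ near $T$ (at Lebesgue points), which suffices for local solvability but should be phrased that way; (ii) the lower bound on $\langle\mathcal P_2(\tau)x,x\rangle$ does not follow from testing with $v=2\varTheta^*\widehat{\mathcal X}$---that choice again yields $\mathcal J_\tau\geqslant\langle\mathcal P_2(\tau)x,x\rangle$, an inequality in the same direction as $v=0$. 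The correct source of the lower bound is that uniform convexity of $\mathcal J_\tau(0;\cdot)$ together with Cauchy--Schwarz gives $\mathcal J_\tau(x;v)\geqslant-C|x|^2$ for all admissible $v$, while $\langle\mathcal P_2(\tau)x,x\rangle=\inf_v\mathcal J_\tau(x;v)$; this makes the bound two-sided and uniform in $\tau$, closing off blow-up as you intend.
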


\begin{Lemma} \label{lemma:std eq2}
	Suppose that
		\begin{equation} \label{FSLQ:STD}
			\mathcal{H} \geqslant 0, \quad \mathcal{R}(\cdot)\gg 0, \quad \mathcal{Q}(\cdot)-\mathcal{S}(\cdot)^{\top}\mathcal{R}(\cdot)^{-1}\mathcal{S}(\cdot) \geqslant 0.\\
		\end{equation}
		Then \eqref{FSLQ:UC} holds for some $\alpha > 0$,
		and the Riccati equation~\eqref{FSLQ:RE2} admits a unique solution $\mathcal P_2(\cdot) \geqslant 0$.
In addition, if $\mathcal{H} > 0$, then  $\mathcal{P}_2(\cdot) > 0$.
\end{Lemma}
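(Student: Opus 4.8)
The plan is to proceed in two stages: first establish that the standard conditions \eqref{FSLQ:STD} imply the uniform convexity \eqref{FSLQ:UC}, and then invoke Lemma \ref{lemma:convexity} to obtain the unique solution $\mathcal{P}_2(\cdot)$ of the Riccati equation, after which the sign properties follow by a comparison-type argument. For the first stage, I would take an arbitrary $v(\cdot) \in \mathcal{U}_{ad}$, let $\mathcal{X}(\cdot)$ be the corresponding state with $x = 0$, and complete the square in the running cost. Using $\mathcal{R}(\cdot) \gg 0$ we may write
\[
	\big\langle \mathcal{Q}\mathcal{X},\mathcal{X}\big\rangle + 2\big\langle \mathcal{S}\mathcal{X},v\big\rangle + \big\langle \mathcal{R}v,v\big\rangle
	= \big\langle (\mathcal{Q}-\mathcal{S}^{\top}\mathcal{R}^{-1}\mathcal{S})\mathcal{X},\mathcal{X}\big\rangle + \big\langle \mathcal{R}(v+\mathcal{R}^{-1}\mathcal{S}\mathcal{X}),\, v+\mathcal{R}^{-1}\mathcal{S}\mathcal{X}\big\rangle.
\]
Since $\mathcal{H} \geqslant 0$ and $\mathcal{Q}-\mathcal{S}^{\top}\mathcal{R}^{-1}\mathcal{S} \geqslant 0$, the terminal term and the first running term are nonnegative, so $\mathcal{J}(0;v(\cdot)) \geqslant \mathbb{E}\int_0^T \langle \mathcal{R}(v+\mathcal{R}^{-1}\mathcal{S}\mathcal{X}), v+\mathcal{R}^{-1}\mathcal{S}\mathcal{X}\rangle\, \mathrm{d}t \geqslant \lambda\, \mathbb{E}\int_0^T |v+\mathcal{R}^{-1}\mathcal{S}\mathcal{X}|^2\, \mathrm{d}t$ for some $\lambda > 0$ coming from $\mathcal{R}(\cdot) \gg 0$.

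The remaining point is to pass from a lower bound in terms of $v + \mathcal{R}^{-1}\mathcal{S}\mathcal{X}$ to one in terms of $v$ alone; this I expect to be the main (though standard) obstacle. The idea is a perturbation/contradiction argument: if \eqref{FSLQ:UC} failed for every $\alpha > 0$, there would be a sequence $v_k(\cdot)$ with $\mathbb{E}\int_0^T|v_k|^2\,\mathrm{d}t = 1$ and $\mathcal{J}(0;v_k(\cdot)) \to 0$, hence $\mathbb{E}\int_0^T|v_k+\mathcal{R}^{-1}\mathcal{S}\mathcal{X}_k|^2\,\mathrm{d}t \to 0$. Writing $v_k = (v_k + \mathcal{R}^{-1}\mathcal{S}\mathcal{X}_k) - \mathcal{R}^{-1}\mathcal{S}\mathcal{X}_k$ and using the standard SDE estimate $\mathbb{E}\int_0^T|\mathcal{X}_k|^2\,\mathrm{d}t \leqslant K\,\mathbb{E}\int_0^T|v_k|^2\,\mathrm{d}t$ together with $\mathcal{X}_k$ depending linearly and boundedly on $v_k$, one shows $\|\mathcal{R}^{-1}\mathcal{S}\mathcal{X}_k\|$ cannot by itself carry the whole unit norm of $v_k$; more precisely, a careful bookkeeping (as in \cite{ILQ6,ILQ7}) shows $\mathbb{E}\int_0^T|v_k|^2\,\mathrm{d}t \to 0$, contradicting the normalization. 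Alternatively, since this is stated to be known, I would simply cite the corresponding result in \cite{ILQ6} or \cite{ILQ7}.

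Once \eqref{FSLQ:UC} is in hand, Lemma \ref{lemma:convexity} immediately gives the unique solution $\mathcal{P}_2(\cdot) \in C([0,T];\mathbb{S}^n)$ with $\mathcal{R}+\mathcal{D}_1^{\top}\mathcal{P}_1\mathcal{D}_1+\mathcal{D}_2^{\top}\mathcal{P}_2\mathcal{D}_2 \gg 0$, and the value-function formula $\mathcal{V}(x) = \langle \mathcal{P}_2(0)x, x\rangle$. To get $\mathcal{P}_2(\cdot) \geqslant 0$, I would use this representation: for each fixed $x$, $\langle \mathcal{P}_2(t)x,x\rangle$ equals the optimal cost of the problem on $[t,T]$ started from $x$, which under \eqref{FSLQ:STD} is nonnegative by the completion-of-squares bound above (applied on $[t,T]$); since $t$ and $x$ are arbitrary, $\mathcal{P}_2(\cdot) \geqslant 0$. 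For the strict statement when $\mathcal{H} > 0$, I would argue that if $\langle \mathcal{P}_2(t_0)x_0, x_0\rangle = 0$ for some $x_0 \neq 0$, then the optimal state on $[t_0,T]$ started from $x_0$ must satisfy $\mathbb{E}\langle \mathcal{H}\mathcal{X}(T),\mathcal{X}(T)\rangle = 0$, forcing $\mathcal{X}(T) = 0$ a.s.; combined with the nondegeneracy already used, this propagates back to $x_0 = 0$, a contradiction — hence $\mathcal{P}_2(\cdot) > 0$. As with the first stage, since the lemma is attributed to the literature, the cleanest route in the paper is to give the completion-of-squares computation explicitly and cite \cite{ILQ6,ILQ7} for the details.
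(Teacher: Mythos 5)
First, note that the paper itself does not prove this lemma: it is one of the three preliminary results imported from the literature (``These results have been established in the existing literature (see, e.g., \cite{ILQ6,ILQ7})''), so there is no in-paper proof to compare against. Your overall architecture --- completion of squares under \eqref{FSLQ:STD}, then Lemma~\ref{lemma:convexity} for existence/uniqueness of $\mathcal{P}_2$, then the value-function representation for $\mathcal{P}_2(\cdot)\geqslant 0$ and a degeneracy argument for strict positivity when $\mathcal{H}>0$ --- is the standard route and is sound in outline.

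The one step that, as written, would not close is the passage from the bound on $w:=v+\mathcal{R}^{-1}\mathcal{S}\mathcal{X}$ to a bound on $v$. The estimate you invoke, $\mathbb{E}\int_0^T|\mathcal{X}_k|^2\,\mathrm{d}t\leqslant K\,\mathbb{E}\int_0^T|v_k|^2\,\mathrm{d}t$, is compatible with $\|v_k\|_{L^2}=1$, $\|w_k\|_{L^2}\to0$ and $\|\mathcal{R}^{-1}\mathcal{S}\mathcal{X}_k\|_{L^2}\to1$, so it does not yield the contradiction. The correct estimate goes the other way: substitute $v=w-\mathcal{R}^{-1}\mathcal{S}\mathcal{X}$ into the state equation, so that $\mathcal{X}$ solves the linear SDE with bounded coefficients $\mathcal{A}-\mathcal{B}\mathcal{R}^{-1}\mathcal{S}$, $\mathcal{C}_i-\mathcal{D}_i\mathcal{R}^{-1}\mathcal{S}$, driven by $w$ and starting from $0$; the standard SDE estimate then gives $\mathbb{E}\int_0^T|\mathcal{X}|^2\,\mathrm{d}t\leqslant K\,\mathbb{E}\int_0^T|w|^2\,\mathrm{d}t$, hence $\mathbb{E}\int_0^T|v|^2\,\mathrm{d}t\leqslant C\,\mathbb{E}\int_0^T|w|^2\,\mathrm{d}t\leqslant (C/\lambda)\,\mathcal{J}(0;v)$, which is \eqref{FSLQ:UC} directly, with no contradiction argument needed (adaptedness of $w$ to $\mathbb{G}$ is irrelevant here). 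Two smaller points should also be made explicit: (i) the identification of $\langle\mathcal{P}_2(t)x,x\rangle$ with the value of the problem on $[t,T]$ uses that \eqref{FSLQ:STD} restricts to $[t,T]$ and that, by uniqueness, the Riccati solution on $[t,T]$ with terminal datum $\mathcal{H}$ is $\mathcal{P}_2|_{[t,T]}$; (ii) for $\mathcal{H}>0$, the ``propagation back to $x_0=0$'' is obtained by noting that zero value forces $v^*=-\mathcal{R}^{-1}\mathcal{S}\mathcal{X}^*$ a.e.\ (from $\mathcal{R}\gg0$ and the vanishing running cost), so $\mathcal{X}^*$ solves a homogeneous closed-loop linear SDE whose fundamental solution is a.s.\ invertible, and $\mathcal{X}^*(T)=0$ then yields $x_0=0$. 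With these repairs the proposal is a complete proof; alternatively, citing \cite{ILQ6,ILQ7} is exactly what the paper does.
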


Inspired by \cite{BLQ5} and \cite{ILQ6}, let us consider a special FSLQ optimal control problem with partial information,
which is closely related to Problem (BSLQ-P).
The state equation is
\begin{equation} \label{FSLQn:SE}
	\begin{cases}
	\begin{aligned}
		\mathrm{d}X(t) &= 
		\big[ A(t)X(t) + B(t)u(t) + C_{1}(t)v_{1}(t) + C_{2}(t)v_{2}(t) \big]\, \mathrm{d}t \\
		 &\quad+ v_{1}(t)\, \mathrm{d}W_{1}(t) + v_{2}(t)\, \mathrm{d}W_{2}(t), \\ 
		X(0) &= x,
	\end{aligned}
	\end{cases}
\end{equation}
with the initial state $x\in\mathbb{R}^n$, and the cost functional
\begin{equation} \label{FSLQn:CF}
	\begin{aligned}
		\mathcal{J}_\lambda(x; u, v_1, v_2) 
		 =& \ \mathbb{E} \bigg\{
			\lambda |X(T)|^2
			+\int_0^T 
			\Big[\big\langle Q(t)X(t),\, X(t) \big\rangle
			+2\big\langle S_1(t)X(t),\, v_1(t) \big\rangle \\
			&+2\big\langle S_2(t)X(t),\, v_2(t) \big\rangle
			+2\big\langle S_3(t)X(t),\, u(t) \big\rangle
			+\big\langle N_1(t) v_1(t),\, v_1(t)\big\rangle\\
			&+\big\langle N_2(t) v_2(t),\, v_2(t) \big\rangle
			+\big\langle R(t)u(t),\, u(t) \big\rangle\Big]
			\,\mathrm{d}t
		\bigg\},
		\end{aligned}
\end{equation}
where $\lambda > 0$ and other coefficients are the same as Problem (BSLQ-P). In the above system, the control is 
\[
(u,v_1,v_2) \in L^2_{\mathbb{G}}(0,T;\mathbb{R}^m) \times L^2_{\mathbb{G}}(0,T;\mathbb{R}^n) \times L^2_{\mathbb{G}}(0,T;\mathbb{R}^n) =: \widetilde{\mathcal{U}}_{ad}.
\]
The optimal control problem is formulated as follows.

\bigskip

\noindent{\bf Problem ($\textbf{FSLQ-P}_\lambda$)} \
For a given initial state $x\in\mathbb{R}^n $, find a control {$(u^*, v_1^*, v_2^*) \in \widetilde{\mathcal{U}}_{ad}$ such that
\[
\mathcal{J}_{\lambda}(x; u^*, v_1^*, v_2^*) = \inf_{(u, v_1, v_2) \in \widetilde{\mathcal{U}}_{ad}} \mathcal{J}_{\lambda}(x ; u, v_1, v_2) =: \mathcal{V}_{\lambda}(x)
\]

The following theorem reveals the connection between the forward and backward problems under partial information.

\begin{Theorem} \label{thm:bfr}
Let (H1)-(H3) hold. Then there exist constants $\alpha_0 > 0$ and $\lambda_0 > 0$, such that for 
any $\lambda \geqslant \lambda_0$,
\[
\mathcal{J}_{\lambda}(0;u,v_1,v_2) \geqslant \alpha_0\, \mathbb{E}\int_{0}^{T}\left(\left\lvert u(t)\right\rvert^2 + \left\lvert v_1(t)\right\rvert^2 + \left\lvert v_2(t)\right\rvert^2\right)\,\mathrm{d}t, \ \text{for all } (u,v_1,v_2) \in \widetilde{\mathcal{U}}_{ad}.
\]
Moreover, if $G = 0$, then for any $\lambda \geqslant \lambda_0$,
\[
\mathcal{J}_{\lambda}(x;u,v_1,v_2)\geqslant \alpha_0\, \mathbb{E}\int_{0}^{T}\left(\left\lvert u(t)\right\rvert^2 + \left\lvert v_1(t)\right\rvert^2 + \left\lvert v_2(t)\right\rvert^2\right)\, \mathrm{d}t,\ \text{for all } (u,v_1,v_2) \in \widetilde{\mathcal{U}}_{ad}, \text{ all } x \in \mathbb R^n.
\]
\end{Theorem}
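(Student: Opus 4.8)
The plan is to reduce Problem (FSLQ-P$_\lambda$) at the origin (or, when $G=0$, at any $x$) to Problem (BSLQ-P) via the classical forward--backward identification, and then turn Hypothesis (H3) together with the standard a priori estimates for \eqref{BSLQ:SE} into the required coercivity. \textbf{Step 1 (a cost identity).} Fix $(u,v_1,v_2)\in\widetilde{\mathcal U}_{ad}$ and let $X$ solve \eqref{FSLQn:SE} with $X(0)=x$; then $X(T)\in L^2_{\mathcal F_T}(\Omega;\mathbb R^n)$ and, by inspection of \eqref{FSLQn:SE} and \eqref{BSLQ:SE}, the triple $(X,v_1,v_2)$ solves \eqref{BSLQ:SE} with terminal value $\xi=X(T)$ and control $u$. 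By uniqueness of the solution of \eqref{BSLQ:SE}, $(X,v_1,v_2)$ is the state process of Problem (BSLQ-P) for the data $(X(T),u)$. Substituting into \eqref{BSLQ:CF} and \eqref{FSLQn:CF} and using that $X(0)=x$ is deterministic gives
\[
\mathcal J_\lambda(x;u,v_1,v_2)=\lambda\,\mathbb E|X(T)|^2+J\big(X(T);u\big)-\langle Gx,x\rangle .
\]
The last term vanishes when $x=0$, and vanishes for every $x$ once $G=0$; so in both cases it suffices to bound $\lambda\,\mathbb E|X(T)|^2+J(X(T);u)$ below by $\alpha_0\,\mathbb E\int_0^T(|u|^2+|v_1|^2+|v_2|^2)\,\mathrm dt$.

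\textbf{Step 2 (lower bound for $J(\xi;u)$).} Since \eqref{BSLQ:SE} is linear and $J(\cdot\,;\cdot)$ in \eqref{BSLQ:CF} is quadratic, splitting the state into the part with data $(\xi,0)$ and the part with data $(0,u)$ yields $J(\xi;u)=J(0;u)+J(\xi;0)+2\Phi(\xi,u)$ with $\Phi$ the cross bilinear form. Under (H1)--(H2), the standard BSDE estimates bound the $(\xi,0)$-state by $C\,\mathbb E|\xi|^2$ and the $(0,u)$-state by $C\,\mathbb E\int_0^T|u|^2\,\mathrm dt$, hence $|J(\xi;0)|\le C\,\mathbb E|\xi|^2$ and $|\Phi(\xi,u)|\le C\,(\mathbb E|\xi|^2)^{1/2}\big(\mathbb E\int_0^T|u|^2\,\mathrm dt\big)^{1/2}$. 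Combining with $J(0;u)\ge\delta\,\mathbb E\int_0^T|u|^2\,\mathrm dt$ from (H3) and absorbing the cross term by Young's inequality produces a constant $C_*>0$ (depending only on $T$, the bounds in (H1)--(H2) and $\delta$) such that
\[
J(\xi;u)\ \ge\ \tfrac{\delta}{2}\,\mathbb E\!\int_0^T|u(t)|^2\,\mathrm dt-C_*\,\mathbb E|\xi|^2,\qquad\forall\,\xi\in L^2_{\mathcal F_T}(\Omega;\mathbb R^n),\ u\in\mathcal U_{ad}.
\]

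\textbf{Step 3 (controlling $v_1,v_2$ and concluding).} Apply the standard a priori estimate for \eqref{BSLQ:SE} once more, now reading $(v_1,v_2)$ as the martingale integrands $(Z_1,Z_2)$ of the solution with terminal $X(T)$ and control $u$: because the generator is Lipschitz in $(z_1,z_2)$ with free term $Bu$, one obtains $\mathbb E\int_0^T(|v_1|^2+|v_2|^2)\,\mathrm dt\le C'\big(\mathbb E|X(T)|^2+\mathbb E\int_0^T|u|^2\,\mathrm dt\big)$ for a constant $C'\ge1$ depending only on $T$ and (H1). Taking $\xi=X(T)$ in Step 2 and recalling Step 1 (with $x=0$, or with $G=0$ and arbitrary $x$) gives $\mathcal J_\lambda(x;u,v_1,v_2)\ge(\lambda-C_*)\,\mathbb E|X(T)|^2+\tfrac{\delta}{2}\,\mathbb E\int_0^T|u|^2\,\mathrm dt$; then, with $\lambda_0:=C_*+C'$ and $\alpha_0:=\min\{1,\tfrac{\delta}{2(1+C')}\}$, for every $\lambda\ge\lambda_0$ the two displayed estimates combine to yield $\mathcal J_\lambda(x;u,v_1,v_2)\ge\alpha_0\,\mathbb E\int_0^T(|u|^2+|v_1|^2+|v_2|^2)\,\mathrm dt$, first at $x=0$ and, when $G=0$, for all $x\in\mathbb R^n$.

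\textbf{Main obstacle.} The delicate points are Step 1 --- correctly matching the forward triple with a backward state and keeping track of the term $\langle Gx,x\rangle$, which is precisely why the second assertion requires $G=0$ --- and the observation in Step 3 that the \emph{only} available handle on $v_1,v_2$ is that they are the $Z$-component of a BSDE, so the classical bound $\mathbb E\int_0^T|Z|^2\,\mathrm dt\le C\big(\mathbb E|\xi|^2+\mathbb E\int_0^T|Bu|^2\,\mathrm dt\big)$ is exactly what makes the argument close; without it, (H3) and the large penalty $\lambda\,\mathbb E|X(T)|^2$ control only $u$ and $X(T)$, not $v_1$ and $v_2$.
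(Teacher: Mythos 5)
Your proposal is correct and follows essentially the same route as the paper: identify the forward triple $(X,v_1,v_2)$ as the solution of \eqref{BSLQ:SE} with terminal value $X(T)$, split it into the $(0,u)$- and $(X(T),0)$-parts, invoke (H3) together with standard BSDE estimates and Young's inequality, and control $v_1,v_2$ through the $Z$-component estimate $\mathbb{E}\int_0^T(|v_1|^2+|v_2|^2)\,\mathrm{d}t \leqslant C'\big(\mathbb{E}|X(T)|^2+\mathbb{E}\int_0^T|u|^2\,\mathrm{d}t\big)$. Your Steps 1--3 simply repackage the paper's computation \eqref{connection1}--\eqref{connection6} (with the $G$-term bookkeeping done there via \eqref{connection5}) into a cost identity plus a uniform lower bound for $J(\xi;u)$, and your constants $\lambda_0=C_*+C'$ and $\alpha_0=\min\{1,\delta/(2(1+C'))\}$ do close the argument.
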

\begin{proof}
Under (H1), for any given control $(u,v_1,v_2) \in \widetilde{\mathcal{U}}_{ad}$ and any initial state $x\in \mathbb{R}^n$, the state equation \eqref{FSLQn:SE} is uniquely solvable.
Let $X(\cdot)$ denote the solution to \eqref{FSLQn:SE} and set $\eta := X(T) \in L^2_{\mathcal{F}_T}(\Omega;\mathbb{R}^n)$.
We introduce the following BSDE:
\begin{equation}\label{BSDE:eta}
	\begin{cases}
		\begin{aligned}
		\mathrm{d}\widetilde{X}(t) &= 
		\big[ A(t)\widetilde{X}(t) + B(t)u(t) + C_{1}(t)\widetilde{Z}_{1}(t) + C_{2}(t)\widetilde{Z}_{2}(t) \big]\, \mathrm{d}t \\
		&+ \widetilde{Z}_{1}(t)\, \mathrm{d}W_{1}(t) + \widetilde{Z}_{2}(t)\, \mathrm{d}W_{2}(t), \\ 
		\widetilde{X}(T) &= \eta. 
		\end{aligned}
	\end{cases}
\end{equation}
With the uniqueness property, $(X, v_1, v_2)$ is the unique solution to \eqref{BSDE:eta}.
Let $(Y^{u,0}, Z_1^{u,0}, Z_2^{u,0})$ be the unique solution of
\begin{equation} \label{BSDE:Yu}
	\begin{cases}
	\begin{aligned}
	\mathrm{d}Y^{u,0}(t) &= 
	[ A(t)Y^{u,0}(t) + B(t)u(t) + C_1(t)Z^{u,0}_1(t) + C_2(t)Z^{u,0}_2(t) ]\, \mathrm{d}t \\
	&\quad+ Z^{u,0}_{1}(t)\, \mathrm{d}W_{1}(t) + Z^{u,0}_{2}(t)\, \mathrm{d}W_{2}(t), \\
	Y^{u,0}(T) &= 0 ,
	\end{aligned}
	\end{cases}
\end{equation}
and $(Y^{0,\eta}, Z_1^{0,\eta}, Z_2^{0,\eta})$ be the unique solution of
\begin{equation} \label{BSDE:Yx}
	\begin{cases}
	\begin{aligned}
	\mathrm{d}Y^{0,\eta}(t) &= 
	[ A(t)Y^{0,\eta}(t) + C_{1}(t)Z^{0,\eta}_{1}(t) + C_{2}(t)Z^{0,\eta}_{2}(t) ]\, \mathrm{d}t \\
	&\quad+ Z^{0,\eta}_{1}(t)\, \mathrm{d}W_{1}(t) + Z^{0,\eta}_{2}(t)\, \mathrm{d}W_{2}(t), \\
	Y^{0,\eta}(T) &= \eta.
	\end{aligned}
	\end{cases}
\end{equation}
By the linearity of the equations, we have
\[
X = Y^{u,0} + Y^{0,\eta}, \quad v_1 = Z^{u,0}_{1} + Z^{0,\eta}_{1}, \quad v_2 = Z^{u,0}_{2} + Z^{0,\eta}_{2}.
\]
Set 
\[
	M = \begin{pmatrix}
	Q &S_1^{\top} &S_2^{\top} & S_3^{\top}\\
	S_1& N_1& 0& 0 \\
	S_2& 0& N_2& 0 \\ 
	S_3& 0& 0& R
	\end{pmatrix}, \quad
	\gamma_1 = \begin{pmatrix}
	Y^{u,0} \\
	Z^{u,0}_1 \\
	Z^{u,0}_2 \\
	u
	\end{pmatrix}, \quad
	\gamma_2 = \begin{pmatrix}
	Y^{0,\eta} \\
	Z^{0,\eta}_1 \\
	Z^{0,\eta}_2 \\
	0
	\end{pmatrix}.
\]
In Problem (BSLQ-P), when the terminal state $\xi = 0$, the cost functional \eqref{BSLQ:CF} can then be rewritten as
\[
	J(0;u(\cdot)) = \mathbb{E} \left[\langle GY^{u,0}(0), \, Y^{u,0}(0)\rangle + \int_0^T\langle M(t) \gamma_1(t),\, \gamma_1(t)\rangle\, \mathrm{d}t\right],
\]
and thus, it follows from (H3) that
\begin{equation} \label{connection1}
	\begin{aligned}
	\mathcal{J}_\lambda(x;u, v_1, v_2) & = \mathbb{E}\left\{ \lambda|X(T)|^2+\int_0^T\langle M(t)[\gamma_1(t)+\gamma_2(t)],\, \gamma_1(t)+\gamma_2(t)\rangle\, \mathrm{d}t\right\} \\
	& = J(0;u(\cdot)) + \mathbb{E}\bigg[\lambda|X(T)|^2 - \langle GY^{u,0}(0),\, Y^{u,0}(0)\rangle \\
	&\quad + \int_0^T\langle M(t) \gamma_2(t),\, \gamma_2(t)\rangle\, \mathrm{d}t + 2\int_0^T\langle M(t) \gamma_1(t),\, \gamma_2(t)\rangle\, \mathrm{d}t\bigg] \\
	& \geqslant \delta\,\mathbb{E}\int_{0}^{T} |u(t)|^2\, \mathrm{d}t + \mathbb{E}\bigg[\lambda|X(T)|^2 - \langle GY^{u,0}(0),\, Y^{u,0}(0)\rangle \bigg] \\
	&\quad - \left\lvert \mathbb{E}\bigg[\int_0^T\langle M(t) \gamma_2(t),\, \gamma_2(t)\rangle\, \mathrm{d}t + 2\int_0^T\langle M(t) \gamma_1(t),\, \gamma_2(t)\rangle\, \mathrm{d}t\bigg] \right\rvert.
\end{aligned}
\end{equation}
Assumption (H2) indicates that the weighting matrices are all bounded. Hence, there exists a constant $K \geqslant 1$ such that
$|G| \leqslant K$ and $\left\lvert M(\cdot)\right\rvert \leqslant K $. Then we have
\begin{equation} \label{connection2}
	\begin{aligned}
	& \left|\mathbb{E}\left[\int_0^T\langle M(t) \gamma_2(t),\, \gamma_2(t)\rangle\, \mathrm{d}t+2 \int_0^T\langle M(t) \gamma_1(t),\, \gamma_2(t)\rangle\, \mathrm{d}t\right]\right| \\
	& \quad \leqslant K\left[\mathbb{E} \int_0^T|\gamma_2(t)|^2\, \mathrm{d}t + 2\,\mathbb{E} \int_0^T|\gamma_1(t)||\gamma_2(t)|\, \mathrm{d}t\right] \\
	& \quad \leqslant K\left[(\mu+1)\, \mathbb{E} \int_0^T|\gamma_2(t)|^2 \,\mathrm{d}t+\frac{1}{\mu}\, \mathbb{E} \int_0^T|\gamma_1(t)|^2 \,\mathrm{d}t\right],
	\end{aligned}
\end{equation}
where $\mu > 0$ is a constant to be chosen later. If we choose $K \geqslant 1$ large enough, then according to the standard estimate for BSDEs (see, e.g., Briand et al.~\cite[Proposition~2.2]{BCHMP00}), we have
\begin{equation} \label{connection3}
	\mathbb{E} \int_0^T|\gamma_1(t)|^2\, \mathrm{d}t \leqslant K\, \mathbb{E} \int_0^T|u(t)|^2\, \mathrm{d}t, \quad \mathbb{E} \int_0^T|\gamma_2(t)|^2\, \mathrm{d}t \leqslant K\,\mathbb{E}|\xi|^2 = K\,\mathbb{E}|X(T)|^2,
\end{equation}
and if the initial state $x = 0$, we further have 
\begin{equation} \label{connection5}
	|\langle GY^{u,0}(0), \, Y^{u,0}(0)\rangle| = |\langle GY^{0,\eta}(0), \, Y^{0,\eta}(0)\rangle| \leqslant K\, |Y^{0,\eta}(0)|^2 \leqslant K^2\,\mathbb{E}|X(T)|^2 .
\end{equation}
Using \eqref{connection3}, we obtain
\[
	\begin{aligned}
		\mathbb{E} \int_0^T\left[|v_1(t)|^2 + |v_2(t)|^2\right]\,\mathrm{d}t & = \mathbb{E} \int_0^T \left[|Z^{u,0}_1(t) + Z^{0,\eta}_1(t)|^2 + |Z^{u,0}_2(t) + Z^{0,\eta}_2(t)|^2 \right]\,\mathrm{d}t \\
		& \leqslant 2\,\mathbb{E}\int_0^T \left[\left|Z^{u,0}_1(t)\right|^2 + |Z^{u,0}_2(t)|^2\right] \mathrm{d}t \\
		&\quad+ 2\,\mathbb{E}\int_0^T \left[|Z^{0,\eta}_1(t)|^2 + |Z^{0,\eta}_2(t)|^2\right]\, \mathrm{d}t \\
		& \leqslant 2\,\mathbb{E} \int_0^T|\gamma_1(t)|^2\,\mathrm{d}t + 2\,\mathbb{E}\int_0^T|\gamma_2(t)|^2\,\mathrm{d}t \\
		& \leqslant 2K\,\mathbb{E}|X(T)|^2 + 2K\,\mathbb{E}\int_0^T |u(t)|^2\, \mathrm{d}t,
	\end{aligned}
\]
then
\begin{equation} \label{connection4}
	\mathbb{E}|X(T)|^2 \geqslant \frac{1}{2K}\,\mathbb{E} \int_0^T\left[|v_1(t)|^2 + |v_2(t)|^2\right]\,\mathrm{d}t - \mathbb{E}\int_0^T |u(t)|^2\, \mathrm{d}t.
\end{equation}
Combining \eqref{connection1}, \eqref{connection2}, \eqref{connection3}, we obtain
\[
	\begin{aligned}
	\mathcal{J}_\lambda(x;u, v_1, v_2) & \geqslant \left(\delta - \frac{K^2}{\mu}\right)\mathbb{E}\int_{0}^{T} |u(t)|^2\, \mathrm{d}t + \left(\lambda - K^2(\mu +2)\right)\mathbb{E}|X(T)|^2\\
	&\quad + K^2\,\mathbb{E}|X(T)|^2 - \langle GY^{u,0}(0),\, Y^{u,0}(0)\rangle.
	\end{aligned}
\]
Choosing $\mu = \frac{2K^2}{\delta}$ and $\lambda_0 = \frac{\delta}{4} + K^2(\mu + 2)$, due to \eqref{connection4}, for any $\lambda \geqslant \lambda_0$, we deduce that
\begin{equation}\label{connection6}
    \begin{aligned}
	\mathcal{J}_\lambda(x;u, v_1, v_2) &\geqslant \frac{\delta}{8K}\,\mathbb{E}\int_{0}^{T}\left[|u(t)|^2 + |v_1(t)|^2 + |v_2(t)|^2\right]\,\mathrm{d}t \\
	&\quad +\left[K^2\,\mathbb{E}|X(T)|^2 - \langle GY^{u,0}(0),\, Y^{u,0}(0) \rangle\right].
    \end{aligned}
\end{equation}
If the initial state $x = 0$, we obtain from \eqref{connection5} that
\[
	\mathcal{J}_\lambda(0; u,v_1,v_2) \geqslant \frac{\delta}{8K}\, \mathbb{E}\int_{0}^{T}\left[|u(t)|^2 + |v_1(t)|^2 + |v_2(t)|^2\right]\, \mathrm{d}t,\, \text{for all } (u,v_1,v_2) \in \widetilde{\mathcal{U}}_{ad},
\]
and if $G = 0$, \eqref{connection6} implies that
\[
	\mathcal{J}_\lambda(x; u,v_1,v_2) \geqslant \frac{\delta}{8K}\,\mathbb{E}\int_{0}^{T}\left[|u(t)|^2 + |v_1(t)|^2 + |v_2(t)|^2\right]\,\mathrm{d}t,\, \text{for all } (u,v_1,v_2) \in \widetilde{\mathcal{U}}_{ad}, \text{ all } x \in \mathbb R^n.
\]
\end{proof}
The above result generalizes Theorem 4.1 in Sun et al.~\cite{ILQ6}, 
which establishes the connection between forward and backward problems with complete information. We extend this connection to the framework with partial information.
Combining Theorem~\ref{thm:bfr} with Lemma~\ref{lemma:Req1} and Lemma~\ref{lemma:convexity}, we have the following two corollaries.
\begin{Corollary} \label{coro1}
	Let (H1) -- (H3) hold and $\lambda_0$ be the positive constant given in Theorem \ref{thm:bfr}. Then Problem ($\text{FSLQ-P}_\lambda$) is uniquely solvable for any $\lambda \geqslant \lambda_0$.
Moreover, if $G = 0$, then for any $\lambda \geqslant \lambda_0$, the value function $\mathcal{V}_\lambda$ satisfies
\[
\mathcal{V}_\lambda(x) \geqslant 0, \quad \text{for any } x \in \mathbb{R}^n.
\]
\end{Corollary}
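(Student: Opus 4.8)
The plan is to observe that Problem ($\text{FSLQ-P}_\lambda$) is itself an instance of the generic Problem (FSLQ-P) once the triple $(u,v_1,v_2)$ is regarded as a single augmented control, and then to read off the conclusion from Theorem~\ref{thm:bfr}, Lemma~\ref{lemma:Req1} and Lemma~\ref{lemma:convexity}. First I would set $\bar v = (u^{\top}, v_1^{\top}, v_2^{\top})^{\top}$, an $\mathbb{R}^{m+2n}$-valued $\mathbb{G}$-progressively measurable process, so that $\widetilde{\mathcal{U}}_{ad}$ is identified with $L^2_{\mathbb{G}}(0,T;\mathbb{R}^{m+2n})$, i.e.\ the admissible set of Problem (FSLQ-P) with control dimension $m+2n$ in place of $m$. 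Then the state equation~\eqref{FSLQn:SE} and cost functional~\eqref{FSLQn:CF} coincide with the generic ones under the identifications
\[
\mathcal{A} = A, \quad \mathcal{B} = (B,\ C_1,\ C_2), \quad \mathcal{C}_1 = \mathcal{C}_2 = 0, \quad \mathcal{D}_1 = (0,\ I_n,\ 0), \quad \mathcal{D}_2 = (0,\ 0,\ I_n),
\]
\[
\mathcal{H} = \lambda I_n, \quad \mathcal{Q} = Q, \quad \mathcal{S} = \begin{pmatrix} S_3 \\ S_1 \\ S_2 \end{pmatrix}, \quad \mathcal{R} = \begin{pmatrix} R & 0 & 0 \\ 0 & N_1 & 0 \\ 0 & 0 & N_2 \end{pmatrix}.
\]
By (H1) and (H2) all of these coefficients lie in the $L^{\infty}$ spaces required in the formulation of Problem (FSLQ-P), so Lemmas~\ref{lemma:Req1} and~\ref{lemma:convexity} are applicable; note in particular that with $\mathcal{C}_1 = \mathcal{C}_2 = 0$ the Lyapunov equation~\eqref{FSLQ:RE1} reduces to the linear matrix ODE $\dot{\mathcal{P}}_1 + \mathcal{P}_1 A + A^{\top}\mathcal{P}_1 + Q = 0$, $\mathcal{P}_1(T) = \lambda I_n$, which by Lemma~\ref{lemma:Req1} has a unique solution $\mathcal{P}_1(\cdot)\in C([0,T];\mathbb{S}^n)$.

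Next I would check the uniform convexity condition~\eqref{FSLQ:UC} for this instance. Since $|\bar v|^2 = |u|^2 + |v_1|^2 + |v_2|^2$ and $\mathcal{J}(0;\bar v) = \mathcal{J}_\lambda(0;u,v_1,v_2)$, the first assertion of Theorem~\ref{thm:bfr} says exactly that, for every $\lambda \geqslant \lambda_0$, one has $\mathcal{J}(0;\bar v) \geqslant \alpha_0\, \mathbb{E}\int_0^T |\bar v(t)|^2\,\mathrm{d}t$ for all admissible $\bar v$, i.e.\ \eqref{FSLQ:UC} holds with $\alpha = \alpha_0$. Hence Lemma~\ref{lemma:convexity} gives, for each such $\lambda$, a unique solution $\mathcal{P}_2(\cdot)\in C([0,T];\mathbb{S}^n)$ of the Riccati equation~\eqref{FSLQ:RE2} (with the data above), the uniform positive definiteness of $\mathcal{R} + \mathcal{D}_1^{\top}\mathcal{P}_1\mathcal{D}_1 + \mathcal{D}_2^{\top}\mathcal{P}_2\mathcal{D}_2$, a unique optimal control of feedback form $\varTheta^*(\cdot)\widehat{X}^*(\cdot)$, and the value function $\mathcal{V}_\lambda(x) = \langle \mathcal{P}_2(0)x,\ x\rangle$. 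This yields the unique solvability of Problem ($\text{FSLQ-P}_\lambda$) for all $\lambda \geqslant \lambda_0$.

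For the second claim, suppose $G = 0$. Then the second assertion of Theorem~\ref{thm:bfr} gives
\[
\mathcal{J}_\lambda(x;u,v_1,v_2) \geqslant \alpha_0\, \mathbb{E}\int_0^T \big(|u(t)|^2 + |v_1(t)|^2 + |v_2(t)|^2\big)\,\mathrm{d}t \geqslant 0
\]
for all $(u,v_1,v_2)\in\widetilde{\mathcal{U}}_{ad}$ and all $x\in\mathbb{R}^n$ whenever $\lambda \geqslant \lambda_0$; taking the infimum over admissible controls gives $\mathcal{V}_\lambda(x)\geqslant 0$. (In passing, together with $\mathcal{V}_\lambda(x)=\langle\mathcal{P}_2(0)x,x\rangle$ this also shows $\mathcal{P}_2(0)\geqslant 0$, though this is not needed here.)

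I do not expect a genuine obstacle: the substance of the corollary is already carried by Theorem~\ref{thm:bfr} and Lemma~\ref{lemma:convexity}. The only point demanding care is the bookkeeping of the augmented control --- verifying that the block matrices above reproduce \eqref{FSLQn:SE}--\eqref{FSLQn:CF} exactly, that the generic Lyapunov and Riccati equations specialize correctly when $\mathcal{C}_1 = \mathcal{C}_2 = 0$, and that the $\mathbb{G}$-measurability structure of $\widetilde{\mathcal{U}}_{ad}$ is preserved under the identification with $L^2_{\mathbb{G}}(0,T;\mathbb{R}^{m+2n})$.
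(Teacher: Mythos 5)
Your proposal is correct and is essentially the argument the paper intends (the paper states the corollary without proof, as an immediate consequence of combining Theorem~\ref{thm:bfr} with Lemmas~\ref{lemma:Req1} and~\ref{lemma:convexity}). Your careful identification of the augmented control $(u,v_1,v_2)$ with the block coefficients $\mathcal{B}=(B,C_1,C_2)$, $\mathcal{C}_1=\mathcal{C}_2=0$, $\mathcal{D}_1=(0,I,0)$, $\mathcal{D}_2=(0,0,I)$, etc., is exactly the bookkeeping needed to apply those results, and the nonnegativity of $\mathcal{V}_\lambda$ when $G=0$ follows, as you say, by taking the infimum in the second assertion of Theorem~\ref{thm:bfr}.
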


\begin{Corollary} \label{coro2}
	Let (H1) -- (H3) hold and $\lambda_0$ be the positive constant given in Theorem \ref{thm:bfr}. Then, for any $\lambda \geqslant \lambda_0$, the Lyapunov equation
\begin{equation} \label{FSLQn:RE1}
	\begin{cases}
		\dot{\mathcal{P}_1}_\lambda + {\mathcal{P}_1}_\lambda A + A^{\top}{\mathcal{P}_1}_\lambda + Q  = 0,\\
	{\mathcal{P}_1}_\lambda(T) = \lambda I
	\end{cases}
\end{equation}
and Riccati equation
\begin{equation} \label{FSLQn:RE2}
		\begin{cases}
		\dot{\mathcal{P}_2}_\lambda + {\mathcal{P}_2}_\lambda A + A^{\top}{\mathcal{P}_2}_\lambda + Q \\
	  \quad- \begin{pmatrix}
			C_1^{\top}{\mathcal{P}_2}_\lambda + S_1 \\ C_2^{\top}{\mathcal{P}_2}_\lambda + S_2 \\ B^{\top}{\mathcal{P}_2}_\lambda + S_3
		\end{pmatrix}^{\top}
		\begin{pmatrix}
			N_1+{\mathcal{P}_1}_\lambda & 0 & 0\\
			0 & N_2+{\mathcal{P}_2}_\lambda & 0\\
			0 & 0 & R
		\end{pmatrix}^{-1}
		\begin{pmatrix}
			C_1^{\top}{\mathcal{P}_2}_\lambda + S_1 \\ C_2^{\top}{\mathcal{P}_2}_\lambda + S_2 \\ B^{\top}{\mathcal{P}_2}_\lambda + S_3
		\end{pmatrix} 
		= 0, \\
		{\mathcal{P}_2}_\lambda(T) = \lambda I
	\end{cases}
\end{equation}
admit unique solutions ${\mathcal{P}_1}_\lambda, {\mathcal{P}_2}_\lambda \in C(0,T;\mathbb{S}^n)$, respectively, such that
\begin{equation} \label{coro2: uniformly positive}
	\begin{pmatrix}
		N_1+{\mathcal{P}_1}_\lambda & 0 & 0\\
		0 & N_2+{\mathcal{P}_2}_\lambda & 0\\
		0 & 0 & R
	\end{pmatrix} \gg 0.
	\end{equation}
Consequently, in the cost functional \eqref{BSLQ:CF}, the control weighting matrix $R(\cdot) \gg 0$.
The value function is given by
\[
\mathcal{V}_\lambda(x) = \langle {\mathcal{P}_2}_\lambda(0)x, \, x\rangle.
\]
\end{Corollary}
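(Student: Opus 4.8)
The plan is to realize Problem~($\text{FSLQ-P}_\lambda$) as a special case of the general Problem (FSLQ-P) and then read off the conclusion from Lemma~\ref{lemma:Req1}, Theorem~\ref{thm:bfr}, and Lemma~\ref{lemma:convexity}. Concretely, I would take the state to be $X(\cdot)$ and regard the triple $(v_1,v_2,u)$ as a single control of dimension $2n+m$; since $\widetilde{\mathcal{U}}_{ad}=L^2_{\mathbb{G}}(0,T;\mathbb{R}^n)\times L^2_{\mathbb{G}}(0,T;\mathbb{R}^n)\times L^2_{\mathbb{G}}(0,T;\mathbb{R}^m)$ is exactly the admissible class $\mathcal{U}_{ad}$ for that enlarged control dimension, the partial-information structure of Problem (FSLQ-P) is respected. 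With the ordering $(v_1,v_2,u)$ I would set
\[
\mathcal{A}=A,\quad \mathcal{B}=(C_1,\,C_2,\,B),\quad \mathcal{C}_1=\mathcal{C}_2=0,\quad \mathcal{D}_1=(I,\,0,\,0),\quad \mathcal{D}_2=(0,\,I,\,0),
\]
\[
\mathcal{H}=\lambda I,\quad \mathcal{Q}=Q,\quad \mathcal{S}=\begin{pmatrix}S_1\\ S_2\\ S_3\end{pmatrix},\quad \mathcal{R}=\operatorname{diag}\big(N_1,\,N_2,\,R\big),
\]
so that the state equation and cost functional of Problem~($\text{FSLQ-P}_\lambda$) coincide with those of Problem (FSLQ-P). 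Under (H1) and (H2) these coefficients satisfy the boundedness requirements. A direct substitution then shows that the abstract Lyapunov equation~\eqref{FSLQ:RE1} becomes~\eqref{FSLQn:RE1}, the abstract Riccati equation~\eqref{FSLQ:RE2} becomes~\eqref{FSLQn:RE2}, and $\mathcal{R}+\mathcal{D}_1^{\top}\mathcal{P}_1\mathcal{D}_1+\mathcal{D}_2^{\top}\mathcal{P}_2\mathcal{D}_2=\operatorname{diag}(N_1+{\mathcal{P}_1}_\lambda,\,N_2+{\mathcal{P}_2}_\lambda,\,R)$, which is precisely the matrix in~\eqref{coro2: uniformly positive}.

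With this dictionary in hand, Lemma~\ref{lemma:Req1} gives at once the unique solvability of~\eqref{FSLQn:RE1} with ${\mathcal{P}_1}_\lambda(\cdot)\in C([0,T];\mathbb{S}^n)$. For the Riccati equation I would check the hypothesis~\eqref{FSLQ:UC} of Lemma~\ref{lemma:convexity}: Theorem~\ref{thm:bfr} says that for every $\lambda\geqslant\lambda_0$,
\[
\mathcal{J}_\lambda(0;u,v_1,v_2)\geqslant\alpha_0\,\mathbb{E}\int_0^T\big(|u(t)|^2+|v_1(t)|^2+|v_2(t)|^2\big)\,\mathrm{d}t,
\]
and since $|u|^2+|v_1|^2+|v_2|^2$ is the squared norm of the enlarged control, this is exactly~\eqref{FSLQ:UC} with $\alpha=\alpha_0$. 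Hence Lemma~\ref{lemma:convexity} applies to Problem~($\text{FSLQ-P}_\lambda$) and yields a unique ${\mathcal{P}_2}_\lambda(\cdot)\in C([0,T];\mathbb{S}^n)$ solving~\eqref{FSLQn:RE2} with $\operatorname{diag}(N_1+{\mathcal{P}_1}_\lambda,\,N_2+{\mathcal{P}_2}_\lambda,\,R)\gg 0$, together with the value-function formula $\mathcal{V}_\lambda(x)=\langle{\mathcal{P}_2}_\lambda(0)x,\,x\rangle$. Finally, the claim $R(\cdot)\gg 0$ is immediate: the block-diagonal matrix being bounded below by some $\delta' I$ uniformly in $t$ forces each diagonal block, in particular $R(\cdot)$, to satisfy the same bound.

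I do not expect a genuine obstacle here — the corollary is a bookkeeping consequence of the three preceding results — but two points deserve care. First, the enlarged control must be ordered as $(v_1,v_2,u)$ rather than $(u,v_1,v_2)$, so that the gain term $\mathcal{B}^{\top}\mathcal{P}_2+\mathcal{D}_1^{\top}\mathcal{P}_1\mathcal{C}_1+\mathcal{D}_2^{\top}\mathcal{P}_2\mathcal{C}_2+\mathcal{S}$ and the matrix inverted in~\eqref{FSLQ:RE2} match block-by-block the stacked vector and block-diagonal matrix written in~\eqref{FSLQn:RE2} and~\eqref{coro2: uniformly positive}. Second, one should confirm the boundedness hypotheses of Lemma~\ref{lemma:Req1} and Lemma~\ref{lemma:convexity} for the chosen coefficients, which is immediate from (H1)--(H2); with this the argument is complete.
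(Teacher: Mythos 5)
Your proposal is correct and follows the same route as the paper, which obtains this corollary simply by specializing Lemma~\ref{lemma:Req1} and Lemma~\ref{lemma:convexity} to Problem~($\text{FSLQ-P}_\lambda$) viewed as an instance of Problem~(FSLQ-P) with the enlarged control $(v_1,v_2,u)$, the uniform convexity hypothesis~\eqref{FSLQ:UC} being supplied by Theorem~\ref{thm:bfr}. Your explicit coefficient dictionary and the block-ordering check just spell out what the paper leaves implicit.
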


\subsection{Simplification of the cost functional} \label{section:simplification}
To better use the previous results about Problem ($\text{FSLQ-P}_\lambda$) and simplify the subsequent calculations, we make some reductions for Problem (BSLQ-P) in this section, similar to \cite{ILQ6}. Specifically, consider the following linear ODE:
\[
	\begin{cases}
		\dot{\varPhi}(t) + \varPhi(t)A(t) + A(t)^{\top}\varPhi(t) + Q(t)  = 0,\\
		\varPhi(0) = -G.
	\end{cases}
\]
By applying It\^o's formula to $\langle \varPhi(\cdot)Y(\cdot), \ Y(\cdot) \rangle$, we obtain
\begin{equation} \label{GY0}
	\begin{aligned}
		\mathbb{E}\langle GY(0),\, Y(0)\rangle &= \mathbb{E}\int_{0}^{T}\mathrm{d} \big\langle \varPhi(t) Y(t),\, Y(t)\big\rangle -\mathbb{E}\big\langle \varPhi(T)\xi,\, \xi\big\rangle \\
		& =\mathbb{E} \int_0^T\Big[\big\langle(\dot{\varPhi}+\varPhi A+A^{\top} \varPhi) Y,\, Y\big\rangle + 2\big\langle B^{\top}\varPhi Y,\, u\big\rangle + 2\big\langle C_1^{\top}\varPhi Y,\, Z_1\big\rangle \\
		& \quad +2\big\langle C_2^{\top}\varPhi Y,\, Z_2\big\rangle+\big\langle \varPhi Z_1,\, Z_1\big\rangle + \big\langle \varPhi Z_2,\, Z_2\big\rangle\Big] \,\mathrm{d}t - \mathbb{E}\big\langle \varPhi(T)\xi,\, \xi\big\rangle\\
		& =\mathbb{E} \int_0^T\Big[-\big\langle QY,\, Y\big\rangle + \big\langle \varPhi Z_1,\, Z_1\big\rangle + \big\langle \varPhi Z_2,\, Z_2\big\rangle + 2\big\langle B^{\top}\varPhi Y,\, u\big\rangle\\
		& \quad+2\big\langle C_1^{\top}\varPhi Y,\, Z_1\big\rangle +2\big\langle C_2^{\top}\varPhi Y,\, Z_2\big\rangle\Big] \,\mathrm{d}t - \mathbb{E}\big\langle \varPhi(T)\xi,\, \xi\big\rangle.
	\end{aligned}
\end{equation}
Combining \eqref{GY0} and the transformations
\begin{equation}\label{Transformations}
\begin{aligned}
&N_1^\varPhi = N_1 + \varPhi,\quad N_2^\varPhi = N_2 + \varPhi,\\
&S_1^\varPhi = S_1 + C_1^{\top}\varPhi,\quad S_2^\varPhi = S_2 + C_2^{\top}\varPhi,\quad S_3^\varPhi = S_3+B^{\top}\varPhi, \\
\end{aligned}
\end{equation}
we obtain a new form of cost functional
\[
\begin{aligned}
	J(\xi; u(\cdot)) =& \ \mathbb{E}
		\int_0^T 
		\Big[
		\big\langle N_1^\varPhi Z_1,\, Z_1\big\rangle
		+\big\langle N_2^\varPhi Z_2,\, Z_2 \big\rangle 
		+2\big\langle S_1^\varPhi Y,\, Z_1 \big\rangle +2\big\langle S_2^\varPhi Y,\, Z_2 \big\rangle \\
		&+2\big\langle S_3^\varPhi Y,\, u \big\rangle	+\big\langle Ru,\, u \big\rangle 
		\Big]
		\,\mathrm{d}t
	- \mathbb{E}\langle \varPhi(T)\xi,\xi\rangle.
\end{aligned}
\]
Note that $\varPhi(\cdot)$ is independent of control $u(\cdot)$ and the terminal state $\xi$ is given. Thus, our problem is equivalent to
minimizing
\[
\begin{aligned}
	J^\varPhi(\xi; u(\cdot)) =& \ \mathbb{E}
    	\int_0^T 
	    \Big[
	    \big\langle N_1^\varPhi Z_1,\, Z_1\big\rangle
	    +\big\langle N_2^\varPhi Z_2,\, Z_2 \big\rangle 
	    +2\big\langle S_1^\varPhi Y,\, Z_1 \big\rangle +2\big\langle S_2^\varPhi Y,\, Z_2 \big\rangle \\
    	&+2\big\langle S_3^\varPhi Y,\, u \big\rangle	+\big\langle Ru,\, u \big\rangle 
    	\Big]
	    \,\mathrm{d}t
\end{aligned}
\]
over $\mathcal{U}_{ad}$, subject to the state equation~\eqref{BSLQ:SE}.
For this reason, without loss of generality, we may assume the following condition in the rest of the paper:
\begin{equation} \label{simplification}
	G=0, \quad Q(\cdot)=0.
\end{equation}

One direct result from this simplification is the following proposition.

\begin{Proposition} \label{prop:P_lambda increasing}
	Suppose (H1) -- (H3) and \eqref{simplification} hold, and let $\lambda_0$ be the positive constant given in Theorem \ref{thm:bfr}. For any $\lambda \geqslant \lambda_0$, let ${\mathcal{P}_1}_\lambda(\cdot)$ and ${\mathcal{P}_2}_\lambda(\cdot)$ be 
	the solutions to Lyapunov equation~\eqref{FSLQn:RE1} and Riccati equation~\eqref{FSLQn:RE2}, respectively. Then we have
	 \[
	 {\mathcal{P}_1}_\lambda(\cdot) > 0, \quad {\mathcal{P}_2}_\lambda(\cdot) \geqslant 0,
	 \]
	 and for any $\lambda_2 > \lambda_1 \geqslant \lambda_0$, we have
\[
	 {\mathcal{P}_1}_{\lambda_2}(\cdot)-{\mathcal{P}_1}_{\lambda_1}(\cdot)>0, \quad {\mathcal{P}_2}_{\lambda_2}(\cdot) - {\mathcal{P}_2}_{\lambda_1}(\cdot)>0.
\]
\end{Proposition}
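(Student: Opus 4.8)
The plan is to handle $\{{\mathcal{P}_1}_\lambda\}$ directly from the linear Lyapunov structure, and $\{{\mathcal{P}_2}_\lambda\}$ via the value-function interpretation of Problem ($\text{FSLQ-P}_\lambda$) on subintervals, followed by a Riccati-comparison argument; throughout we use the reduction $G=0$, $Q(\cdot)=0$ from \eqref{simplification}. With $Q=0$, \eqref{FSLQn:RE1} reads $\dot{{\mathcal{P}_1}_\lambda}+{\mathcal{P}_1}_\lambda A+A^{\top}{\mathcal{P}_1}_\lambda=0$, ${\mathcal{P}_1}_\lambda(T)=\lambda I$, so Lemma~\ref{lemma:Req1} with $\mathcal H=\lambda I>0$ and $\mathcal Q=0\geqslant 0$ gives ${\mathcal{P}_1}_\lambda(\cdot)>0$. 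For $\lambda_2>\lambda_1\geqslant\lambda_0$, ${\mathcal{P}_1}_{\lambda_2}(\cdot)-{\mathcal{P}_1}_{\lambda_1}(\cdot)$ solves the same homogeneous Lyapunov equation with terminal value $(\lambda_2-\lambda_1)I>0$, and Lemma~\ref{lemma:Req1} again yields ${\mathcal{P}_1}_{\lambda_2}(\cdot)-{\mathcal{P}_1}_{\lambda_1}(\cdot)>0$.

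For ${\mathcal{P}_2}_\lambda$, fix $t\in[0,T)$ and write $\mathcal V_\lambda(t,x)$ for the value of Problem ($\text{FSLQ-P}_\lambda$) run on $[t,T]$ with initial datum $\mathcal X(t)=x$. I would first observe that this restricted problem is again uniformly convex with the constant $\alpha_0$ of Theorem~\ref{thm:bfr}: given an admissible triple on $[t,T]$ and zero initial datum at $t$, extend it by zero on $[0,t]$; the extended state vanishes on $[0,t]$, and since $Q=0$ the running cost there vanishes too, so the $[0,T]$-cost of the extension equals the $[t,T]$-cost, and Theorem~\ref{thm:bfr} gives the bound. Hence Lemma~\ref{lemma:convexity} applies to the $[t,T]$-problem, and since the restrictions of ${\mathcal{P}_1}_\lambda$, ${\mathcal{P}_2}_\lambda$ to $[t,T]$ are (by uniqueness) the solutions of \eqref{FSLQn:RE1}, \eqref{FSLQn:RE2} on $[t,T]$, we get $\mathcal V_\lambda(t,x)=\langle{\mathcal{P}_2}_\lambda(t)x,x\rangle$. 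Now let $\Psi_A$ solve $\dot\Psi_A=A\Psi_A$, $\Psi_A(0)=I$; extending any control on $[t,T]$ by zero on $[0,t]$, with initial datum $\Psi_A(t)^{-1}x$, the deterministic state reaches $x$ at time $t$ at zero running cost on $[0,t]$, so $\mathcal V_\lambda(\Psi_A(t)^{-1}x)\leqslant\mathcal V_\lambda(t,x)$. As $G=0$, Corollary~\ref{coro1} gives $\mathcal V_\lambda(\Psi_A(t)^{-1}x)\geqslant 0$, hence $\langle{\mathcal{P}_2}_\lambda(t)x,x\rangle\geqslant 0$ for all $t$ and $x$, i.e.\ ${\mathcal{P}_2}_\lambda(\cdot)\geqslant 0$. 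Moreover, the only $\lambda$-dependence of $\mathcal J_\lambda$ (on any interval) is the term $\lambda|\mathcal X(T)|^2$, so $\mathcal J_{\lambda_2}\geqslant\mathcal J_{\lambda_1}$ pointwise; taking infima gives $\mathcal V_{\lambda_2}(t,x)\geqslant\mathcal V_{\lambda_1}(t,x)$ for $\lambda_2>\lambda_1\geqslant\lambda_0$, that is ${\mathcal{P}_2}_{\lambda_2}(\cdot)\geqslant{\mathcal{P}_2}_{\lambda_1}(\cdot)$.

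To upgrade this to a strict inequality I would pass to the closed-loop form of \eqref{FSLQn:RE2}. Denote by $\mathbf L_\lambda$ and $\mathbf N_\lambda$ the two matrix factors in the quadratic term of \eqref{FSLQn:RE2} (so that term is $\mathbf L_\lambda^{\top}\mathbf N_\lambda^{-1}\mathbf L_\lambda$, and $\mathbf N_\lambda\gg 0$ by Corollary~\ref{coro2}), set $\mathbf B=(C_1,C_2,B)$, $\mathbf S=\mathbf L_\lambda-\mathbf B^{\top}{\mathcal{P}_2}_\lambda$ (which is $\lambda$-independent), and $\Theta_\lambda=-\mathbf N_\lambda^{-1}\mathbf L_\lambda$. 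The standard completion-of-squares identity for \eqref{FSLQn:RE2} (recall $Q=0$) gives, for every bounded matrix $\Theta$ of the appropriate size,
\[
\dot{{\mathcal{P}_2}_\lambda}+{\mathcal{P}_2}_\lambda(A+\mathbf B\Theta)+(A+\mathbf B\Theta)^{\top}{\mathcal{P}_2}_\lambda+\Theta^{\top}\mathbf N_\lambda\Theta+\mathbf S^{\top}\Theta+\Theta^{\top}\mathbf S=(\Theta-\Theta_\lambda)^{\top}\mathbf N_\lambda(\Theta-\Theta_\lambda).
\]
Applying this for $\lambda=\lambda_1$ with $\Theta=\Theta_{\lambda_2}$ and for $\lambda=\lambda_2$ with $\Theta=\Theta_{\lambda_2}$, and subtracting, $\Delta:={\mathcal{P}_2}_{\lambda_2}-{\mathcal{P}_2}_{\lambda_1}$ satisfies
\[
\dot\Delta+\Delta(A+\mathbf B\Theta_{\lambda_2})+(A+\mathbf B\Theta_{\lambda_2})^{\top}\Delta+\mathcal G=0,\qquad \Delta(T)=(\lambda_2-\lambda_1)I,
\]
with $\mathcal G=(\Theta_{\lambda_2}-\Theta_{\lambda_1})^{\top}\mathbf N_{\lambda_1}(\Theta_{\lambda_2}-\Theta_{\lambda_1})+\Theta_{\lambda_2}^{\top}(\mathbf N_{\lambda_2}-\mathbf N_{\lambda_1})\Theta_{\lambda_2}$. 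Since $\mathbf N_{\lambda_2}-\mathbf N_{\lambda_1}=\mathrm{diag}\big({\mathcal{P}_1}_{\lambda_2}-{\mathcal{P}_1}_{\lambda_1},\ {\mathcal{P}_2}_{\lambda_2}-{\mathcal{P}_2}_{\lambda_1},\ 0\big)\geqslant 0$ by the previous steps and $\mathbf N_{\lambda_1}\gg 0$, we have $\mathcal G\geqslant 0$. Letting $\Xi$ solve $\dot\Xi=(A+\mathbf B\Theta_{\lambda_2})\Xi$, $\Xi(0)=I$ (well defined and invertible for all $t$), we get $\tfrac{d}{dt}\big(\Xi^{\top}\Delta\Xi\big)=-\Xi^{\top}\mathcal G\Xi\leqslant 0$, so $\Xi(t)^{\top}\Delta(t)\Xi(t)\geqslant\Xi(T)^{\top}\Delta(T)\Xi(T)=(\lambda_2-\lambda_1)\Xi(T)^{\top}\Xi(T)>0$, hence $\Delta(\cdot)>0$.

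The main obstacle is the global bound ${\mathcal{P}_2}_\lambda(\cdot)\geqslant 0$: at $t=0$ it is immediate from Corollary~\ref{coro1}, but because the weighting matrices are indefinite the Riccati flow \eqref{FSLQn:RE2} need not keep ${\mathcal{P}_2}_\lambda$ in the positive semidefinite cone, so the dynamic-programming (subinterval) argument above is genuinely needed — and it is exactly here that the reduction $Q=0$ from \eqref{simplification} is used, as it makes the $[0,t]$-piece of the cost vanish. In the strict-monotonicity step the delicate point is to linearize around the closed-loop coefficient $A+\mathbf B\Theta_{\lambda_2}$ of the \emph{larger} parameter; linearizing around $\Theta_{\lambda_1}$ instead would leave a sign-indefinite remainder and break the comparison.
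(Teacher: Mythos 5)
Your argument reaches the right conclusions and is correct in substance, but for the $\mathcal{P}_{2\lambda}$ assertions it takes a genuinely different route from the paper (the $\mathcal{P}_{1\lambda}$ part is identical to the paper's). The paper never leaves the initial time on the control side: it obtains $\mathcal{P}_{2\lambda}(0)\geqslant 0$ from $\mathcal{V}_\lambda(x)=\langle \mathcal{P}_{2\lambda}(0)x,x\rangle\geqslant 0$ (Corollaries~\ref{coro1} and~\ref{coro2}) and then propagates nonnegativity forward purely algebraically: with $\varPi$ solving $\dot\varPi=A\varPi$, $\varPi(0)=I$, integration by parts and the Riccati equation (with $Q=0$) give $\mathcal{P}_{2\lambda}(t)=[\varPi^{-1}(t)]^{\top}\big[\mathcal{P}_{2\lambda}(0)+\int_0^t\varPi^{\top}\mathcal{Q}_\lambda\varPi\,\mathrm{d}s\big]\varPi^{-1}(t)$ with $\mathcal{Q}_\lambda\geqslant 0$ the quadratic term, whence $\mathcal{P}_{2\lambda}(\cdot)\geqslant 0$; for strict monotonicity it views the difference as the solution of a standard-type Riccati equation with terminal weight $(\lambda_2-\lambda_1)I>0$ and invokes Lemma~\ref{lemma:std eq2}. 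You instead run dynamic programming on $[t,T]$, identify $\mathcal{V}_\lambda(t,x)=\langle\mathcal{P}_{2\lambda}(t)x,x\rangle$, transfer Corollary~\ref{coro1} to time $t$ by steering the deterministic flow from $\Psi_A(t)^{-1}x$ to $x$ at zero cost (note your $\Psi_A$ is exactly the paper's $\varPi$, so your control-theoretic step is the probabilistic reading of the paper's algebraic identity), get weak monotonicity in $\lambda$ from monotonicity of the cost, and upgrade to strictness by an explicit completion of squares around the closed loop $A+\mathbf{B}\Theta_{\lambda_2}$. I checked your identity, the sign analysis ($\mathbf{N}_{\lambda_2}\geqslant\mathbf{N}_{\lambda_1}$ from the already-proved monotonicities, $\mathbf{N}_{\lambda_1}\gg 0$, $\varDelta(T)>0$, conjugation by $\Xi$), and your remark that linearizing around $\Theta_{\lambda_1}$ would destroy the sign; this step is a correct, self-contained substitute for the paper's appeal to Lemma~\ref{lemma:std eq2}, at the price of being longer.

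The one place where you assert more than the paper's stated results deliver is ``Lemma~\ref{lemma:convexity} applies to the $[t,T]$-problem.'' Lemma~\ref{lemma:convexity}, Theorem~\ref{thm:bfr} and the corollaries are formulated for initial time $0$, where $\mathcal{G}_0$ is trivial; the restricted problem starts at $t$ with a deterministic state but with controls allowed to depend on the nontrivial $\mathcal{G}_t$, so the representation $\mathcal{V}_\lambda(t,x)=\langle\mathcal{P}_{2\lambda}(t)x,x\rangle$ (and likewise your use of Riccati-uniqueness on $[t,T]$) needs a short additional justification — standard for uniformly convex LQ problems, since the completion-of-squares argument is unaffected by the extra initial information, but not available verbatim from the paper's lemmas. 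The paper's forward-propagation argument sidesteps this point entirely, which is the main economy of its proof; your uniform-convexity-by-zero-extension argument for the subinterval problem is fine as stated.
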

\begin{proof}
We prove the property of ${\mathcal{P}_1}_\lambda(\cdot)$ first. Let $\mathcal{P}^1(\cdot) = {\mathcal{P}_1}_{\lambda_2}(\cdot) - {\mathcal{P}_1}_{\lambda_1}(\cdot)$, satisfying
\[
	\left\{\begin{array}{l}
		\dot{\mathcal{P}}^1 + {\mathcal{P}^1}A + A^{\top}{\mathcal{P}^1} = 0,\\
	{\mathcal{P}^1}(T) = (\lambda_2 - \lambda_1) I.
	\end{array}\right.
\]
From Lemma~\ref{lemma:Req1} it follows immediately that ${\mathcal{P}_1}_\lambda(\cdot) > 0$ and ${\mathcal{P}^1}(\cdot) > 0$,
which means ${\mathcal{P}_1}_{\lambda_2}(\cdot) - {\mathcal{P}_1}_{\lambda_1}(\cdot)>0$.

Now, as for Riccati equation~\eqref{FSLQn:RE2}, since $G = 0$, from Corollary~\ref{coro1} and Corollary~\ref{coro2} we have 
\[
 \langle {\mathcal{P}_2}_\lambda(0)x, x\rangle=\mathcal{V}_\lambda(x)  \geqslant 0,\quad \text{for any } x\in\mathbb{R}^n,
\]
which indicates ${\mathcal{P}_2}_\lambda(0) \geqslant 0$. Let $\varPi(\cdot)$ be the solution to the following linear ODE
\[
	\begin{cases}
	\dot{\varPi}(t) = A(t)\varPi(t),\\
	\varPi(0) = I.
	\end{cases}
\]
By the integration by parts formula, we obtain
\[
\begin{split}
\varPi(t)^{\top}{\mathcal{P}_2}_\lambda(t)\varPi(t) &= {\mathcal{P}_2}_\lambda(0) + \int_{0}^{t}\mathrm{d}\, \big[\varPi(s)^{\top}{\mathcal{P}_2}_\lambda(s)\varPi(s)\big] \\
& = {\mathcal{P}_2}_\lambda(0) + \int_{0}^{t}\varPi(s)^{\top}\big[\dot{\mathcal{P}_2}_\lambda(s) + {\mathcal{P}_2}_\lambda(s)A(s) + A(s)^{\top}{\mathcal{P}_2}_\lambda(s)\big]\varPi(s) \,\mathrm{d}s \\
& = {\mathcal{P}_2}_\lambda(0) + \int_{0}^{t}\varPi(s)^{\top}\mathcal{Q}_\lambda(s)\varPi(s) \,\mathrm{d}s, 
\end{split}
\]
where
\[
\mathcal{Q}_\lambda = \begin{pmatrix}
	C_1^{\top}{\mathcal{P}_2}_\lambda + S_1 \\ C_2^{\top}{\mathcal{P}_2}_\lambda + S_2 \\ B^{\top}{\mathcal{P}_2}_\lambda + S_3
\end{pmatrix}^{\top}
\begin{pmatrix}
	N_1+{\mathcal{P}_1}_\lambda & 0 & 0\\
	0 & N_2+{\mathcal{P}_2}_\lambda & 0\\
	0 & 0 & R
\end{pmatrix}^{-1}
\begin{pmatrix}
	C_1^{\top}{\mathcal{P}_2}_\lambda + S_1 \\ C_2^{\top}{\mathcal{P}_2}_\lambda + S_2 \\ B^{\top}{\mathcal{P}_2}_\lambda + S_3
\end{pmatrix}. 
\]
Thanks to the invertibility of $\varPi(\cdot)$, we get
\[
	{\mathcal{P}_2}_\lambda(t) = \left[\varPi^{-1}(t)\right]^{\top}\left[ {\mathcal{P}_2}_\lambda(0) + \int_{0}^{t}\varPi^{\top}(s)\mathcal{Q}_\lambda(s)\varPi(s) \,\mathrm{d}s \right]\varPi^{-1}(t), \quad \text{for any } t\in[0,T].
\]
By Corollary~\ref{coro2}, $\mathcal{Q}_\lambda(\cdot) \geqslant 0$, and together with ${\mathcal{P}_2}_\lambda(0) \geqslant 0$, it follows that ${\mathcal{P}_2}_\lambda(\cdot) \geqslant 0$.
As for the monotonicity of ${\mathcal{P}_2}_\lambda(\cdot)$ with respect to $\lambda$, just repeat the same procedure like ${\mathcal{P}_1}_\lambda(\cdot)$ and use Lemma~\ref{lemma:std eq2}. This completes the proof.
\end{proof}

\subsection{Construction of the optimal control} \label{section:CONSTRUCT of the optimal control}
\subsubsection{The Hamiltonian system and the matrix-valued  differential equation }
\begin{Theorem} \label{theorem: FBSDE}
	Let (H1) -- (H2) hold and $\xi\in L^2_{\mathcal{F}_T}(\Omega;\mathbb{R}^n)$ be given.
A control $u^*(\cdot)\in\mathcal{U}_{ad}$ is optimal if and only if the following conditions are satisfied:

(i)\, $J(0;u(\cdot)) \geqslant 0$ for all $u(\cdot)\in\mathcal{U}_{ad}$;

(ii)\, The solution $(X^*(\cdot), Y^*(\cdot), Z_1^*(\cdot), Z_2^*(\cdot))$ to the FBSDE
\begin{equation} \label{FBSDE:state equation}
	\begin{cases}
	\begin{aligned}
	&\mathrm{d}X^* = \big[QY^*-A^{\top} X^*+S_1^{\top}Z_1^*+S_2^{\top}Z_2^*+S_3^{\top}u^*\big] \,\mathrm{d}t \\
	&\quad\quad\quad+\big[-C_1^{\top} X^*+S_1Y^*+N_{1}Z_1^*\big] \,\mathrm{d}W_1 \\
	&\quad\quad\quad+\big[-C_2^{\top} X^*+S_2Y^*+N_{2}Z_2^*\big] \,\mathrm{d}W_2, \\	
	&\mathrm{d}Y^*= \big[AY^*+B u^*+C_1 Z_1^*+C_2 Z_2^*\big] \,\mathrm{d}t+Z_1^* \,\mathrm{d}W_1 + Z_2^* \,\mathrm{d}W_2,\\
	&X^*(0)= GY^*(0), \quad Y^*(T)=\xi
	\end{aligned}
	\end{cases}
\end{equation}
satisfies
\begin{equation} \label{FBSDE: u* equation}
S_3\widehat{Y}^* - B^{\top}\widehat{X}^* + Ru^* = 0.
\end{equation}
\end{Theorem}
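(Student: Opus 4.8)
The plan is to use the standard convex-variational characterization of optimality. Since the control enters the state equation \eqref{BSLQ:SE} linearly and the cost functional \eqref{BSLQ:CF} is quadratic in $(Y,Z_1,Z_2,u)$, the map $u(\cdot)\mapsto J(\xi;u(\cdot))$ is a quadratic functional on the Hilbert space $\mathcal{U}_{ad}$ of the form $J(\xi;u)=J(0;u)+\ell(u)+c$, where $u\mapsto J(0;u)$ is the purely quadratic part, $\ell$ is affine, and $c$ is a constant. Such a functional attains its infimum at $u^*$ if and only if (i) its quadratic part is nonnegative, i.e. $J(0;u)\geqslant 0$ for all $u\in\mathcal{U}_{ad}$, and (ii) $u^*$ is a stationary point, i.e. the Gâteaux derivative vanishes: $\frac{d}{d\varepsilon}J(\xi;u^*+\varepsilon u)\big|_{\varepsilon=0}=0$ for all $u\in\mathcal{U}_{ad}$. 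So the heart of the proof is to compute this directional derivative and show it equals, up to a nonzero constant, $\mathbb{E}\int_0^T\langle S_3\widehat Y^*-B^\top\widehat X^*+Ru^*,\,u\rangle\,\mathrm{d}t$, which vanishes for all $u(\cdot)\in\mathcal{U}_{ad}=L^2_{\mathbb{G}}(0,T;\mathbb{R}^m)$ precisely when \eqref{FBSDE: u* equation} holds.

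To carry this out, first I would fix $u^*$ with state $(Y^*,Z_1^*,Z_2^*)$ and, for an arbitrary $u\in\mathcal{U}_{ad}$, let $(Y_0,Z_{1,0},Z_{2,0})$ be the solution of the BSDE \eqref{BSLQ:SE} with terminal value $0$ and control $u$ (the variation direction); by linearity the state corresponding to $u^*+\varepsilon u$ is $(Y^*+\varepsilon Y_0,\,Z_1^*+\varepsilon Z_{1,0},\,Z_2^*+\varepsilon Z_{2,0})$. Expanding $J(\xi;u^*+\varepsilon u)$ in $\varepsilon$, the coefficient of $\varepsilon$ is
\[
\begin{aligned}
\mathcal{I}:=\ &2\,\mathbb{E}\Big\{\langle GY^*(0),Y_0(0)\rangle+\int_0^T\big[\langle QY^*,Y_0\rangle+\langle S_1Y^*,Z_{1,0}\rangle+\langle S_1^\top Z_1^*,Y_0\rangle\\
&+\langle S_2Y^*,Z_{2,0}\rangle+\langle S_2^\top Z_2^*,Y_0\rangle+\langle S_3Y^*,u\rangle+\langle S_3^\top u^*,Y_0\rangle\\
&+\langle N_1Z_1^*,Z_{1,0}\rangle+\langle N_2Z_2^*,Z_{2,0}\rangle+\langle Ru^*,u\rangle\big]\,\mathrm{d}t\Big\}.
\end{aligned}
\]
The next step is to introduce the adjoint process $X^*$ as the solution of the forward SDE in \eqref{FBSDE:state equation} — this is exactly the adjoint equation associated with the backward state equation, and the drift/diffusion coefficients $QY^*-A^\top X^*+S_1^\top Z_1^*+\dots$, $-C_1^\top X^*+S_1Y^*+N_1Z_1^*$, etc., are dictated by matching terms. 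Applying Itô's formula to $\langle X^*(t),Y_0(t)\rangle$ on $[0,T]$ and taking expectations, the terminal term vanishes because $Y_0(T)=0$, the initial term produces $\mathbb{E}\langle X^*(0),Y_0(0)\rangle=\mathbb{E}\langle GY^*(0),Y_0(0)\rangle$ by the coupling $X^*(0)=GY^*(0)$, and the integrand collapses: all the $Y^*,Z_1^*,Z_2^*$-terms in $\mathcal{I}$ are absorbed, leaving
\[
\mathbb{E}\int_0^T\big[\langle S_3Y^*,u\rangle+\langle Ru^*,u\rangle-\langle B^\top X^*,u\rangle\big]\,\mathrm{d}t=\tfrac12\mathcal{I}.
\]
Finally, since $u(\cdot)$ is $\mathbb{G}$-adapted, I would use the tower property $\mathbb{E}\langle S_3Y^*-B^\top X^*,u\rangle=\mathbb{E}\langle\widehat{S_3Y^*-B^\top X^*},u\rangle=\mathbb{E}\langle S_3\widehat Y^*-B^\top\widehat X^*,u\rangle$ (the last equality because $S_3,B$ are deterministic and filtering is linear). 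Thus $\mathcal{I}=2\,\mathbb{E}\int_0^T\langle S_3\widehat Y^*-B^\top\widehat X^*+Ru^*,\,u\rangle\,\mathrm{d}t$, and $\mathcal{I}=0$ for all $u\in L^2_{\mathbb{G}}(0,T;\mathbb{R}^m)$ iff \eqref{FBSDE: u* equation} holds; combined with the nonnegativity of the quadratic part $J(0;\cdot)$, which is equivalent to convexity and hence to the infimum being attained at the stationary point, this gives the "if and only if."

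The main obstacle is the bookkeeping in the Itô expansion of $\langle X^*(t),Y_0(t)\rangle$: one must carefully verify that the cross terms from $\mathrm{d}\langle X^*,Y_0\rangle=\langle\mathrm{d}X^*,Y_0\rangle+\langle X^*,\mathrm{d}Y_0\rangle+\mathrm{d}\langle X^*,Y_0\rangle_t$, including the quadratic-covariation contributions $\langle -C_1^\top X^*+S_1Y^*+N_1Z_1^*,\,Z_{1,0}\rangle$ and the analogous $W_2$-term, exactly reproduce $\tfrac12\mathcal{I}$ minus the control term — this is where the precise form of the adjoint equation \eqref{FBSDE:state equation} is forced and where a sign error would be easy to make. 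A secondary subtlety worth stating explicitly is why the condition $J(0;u)\geqslant 0$ is genuinely needed for sufficiency (without it the stationary point could be a saddle); this follows from the identity $J(\xi;u^*+u)-J(\xi;u^*)=J(0;u)+\mathcal{I}(u)$, so if $\mathcal{I}(u)=0$ for all $u$ and $J(0;\cdot)\geqslant0$ then $u^*$ is a global minimizer, whereas conversely optimality of $u^*$ forces both conditions.
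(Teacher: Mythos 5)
Your proposal is correct and follows essentially the same route as the paper: the quadratic expansion $J(\xi;u^*+\varepsilon u)-J(\xi;u^*)=\varepsilon^2 J(0;u)+2\varepsilon\,\mathbb{E}\int_0^T\langle S_3\widehat Y^*-B^{\top}\widehat X^*+Ru^*,u\rangle\,\mathrm{d}t$, obtained via It\^o's formula on $\langle X^*,Y^{u,0}\rangle$ with $Y^{u,0}(T)=0$ and $X^*(0)=GY^*(0)$, followed by conditioning on $\mathcal{G}_t$ and using the arbitrariness of $\varepsilon$ and $u$. No gaps; the only difference is presentational (you phrase the equivalence in Hilbert-space convexity language, while the paper argues directly from the sign of the expansion).
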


\begin{proof}
Clearly, $u^*(\cdot)$ is an optimal control if and only if for any $\varepsilon \in \mathbb{R}$ and any $u(\cdot)\in\mathcal{U}_{ad}$,
\begin{equation} \label{variation: optimal cond}
	J\left(\xi ; u^*(\cdot)+\varepsilon u(\cdot)\right)-J\left(\xi ; u^*(\cdot)\right) \geqslant 0.
\end{equation}
Denote $(Y^{\varepsilon}, Z^{\varepsilon}_1, Z^{\varepsilon}_2)$ as the solution to the following BSDE
\[
\begin{cases}\begin{aligned}
	\mathrm{d}Y^{\varepsilon}(t) & =\big\{A(t)Y^{\varepsilon}(t)+B(t)[u^*(t) + \varepsilon u(t)]+C_1(t) Z^{\varepsilon}_1(t) + C_2(t)Z^{\varepsilon}_2(t)\big\} \,\mathrm{d}t\\ 
	&\quad +Z_1(t) \,\mathrm{d}W_1(t) + Z_2(t) \,\mathrm{d}W_2(t), \\
	Y^{\varepsilon}(T) & = \xi,
\end{aligned}\end{cases}
\]
then
\[
    Y^{\varepsilon}= Y^*+\varepsilon  Y^{u,0}, \quad Z^{\varepsilon}_1= Z_1^* + \varepsilon Z_1^{u,0}, \quad Z^{\varepsilon}_2= Z_2^* + \varepsilon Z_2^{u,0}, 
\]
where $(Y^{u,0}, Z^{u,0}_1, Z^{u,0}_2)$ is the solution to BSDE~\eqref{BSDE:Yu}. Therefore,
\[
	\begin{aligned}
		& J\left(\xi ; u^*(\cdot)+\varepsilon u(\cdot)\right)-J\left(\xi ; u^*(\cdot)\right) \\
		=& \ 2 \varepsilon\, \mathbb{E}\bigg\{\big\langle G Y^*(0),\, Y^{u,0}(0)\big\rangle+\int_0^T
		\Big[
		\big\langle QY^* + S_1^{\top} Z_1^* + S_2^{\top} Z_2^* + S_3^{\top} u^*,\, Y^{u,0} \big\rangle \\
		&+\big\langle N_1 Z_1^* + S_1Y^*,\, Z_1^{u,0}\big\rangle
		+\big\langle N_2 Z_2^* + S_2Y^*,\, Z_2^{u,0} \big\rangle 
		+\big\langle S_3Y^* + Ru^*,\, u \big\rangle
		\Big]
		\,\mathrm{d}t
		\bigg\} + \varepsilon ^2 J(0;u(\cdot)).
	\end{aligned}
\]
By applying It\^o's formula to $\big\langle X^*(\cdot), \, Y^{u,0}(\cdot) \big\rangle$, we have
\[
\begin{aligned}
\mathbb{E}\big\langle G Y^*(0),\, Y^{u,0}(0)\big\rangle &= -\mathbb{E} \int_0^T\Big[\big\langle QY^* + S_1^{\top} Z_1^*+S_2^{\top} Z_2^*+S_3^{\top} u^*,\, Y^{u,0}\big\rangle +\left\langle S_1 Y^*+N_1 Z_1^*,\, Z_1^{u,0}\right\rangle \\
&\quad\quad+\left\langle S_2 Y^*+N_2 Z_2^*,\, Z_2^{u,0}\right\rangle+\big\langle B^{\top} X^*,\, u\big\rangle\Big] \,\mathrm{d}t.
\end{aligned}
\]
We combine the above equations and use properties of conditional expectation to obtain
\[
\begin{aligned}
J\left(\xi ; u^*(\cdot)+\varepsilon u(\cdot)\right)-J\left(\xi ; u^*(\cdot)\right) &= \varepsilon ^2J(0;u(\cdot)) + 2\varepsilon\,\mathbb{E}\int_{0}^{T}\big\langle S_3Y^* - B^{\top}X^* + Ru^*,\, u\big\rangle \,\mathrm{d}t \\
&= \varepsilon ^2J(0;u(\cdot)) + 2\varepsilon\,\mathbb{E}\int_{0}^{T}\big\langle S_3\widehat{Y}^* - B^{\top}\widehat{X}^* + Ru^*,\, u\big\rangle \,\mathrm{d}t.
\end{aligned}
\]
Due to the arbitrariness of $\varepsilon$ and $u(\cdot)$, \eqref{variation: optimal cond} holds if and only if (i) and (ii) are satisfied.
\end{proof}
Under the simplification condition \eqref{simplification}, FBSDE~\eqref{FBSDE:state equation} together with the constraint \eqref{FBSDE: u* equation} becomes the following Hamiltonian system:
\begin{equation} \label{FBSDE:Hamilton System}
	\begin{cases}\begin{aligned}
	&\mathrm{d}X= \big[-A^{\top} X+S_1^{\top}Z_1+S_2^{\top}Z_2+S_3^{\top}u\big] \,\mathrm{d}t \\
	&\quad\quad\,\, +\big[-C_1^{\top} X + S_1Y + N_1Z_1\big] \,\mathrm{d}W_1 \\
	&\quad\quad\,\, +\big[-C_2^{\top} X + S_2Y + N_2 Z_2\big]\, \mathrm{d}W_2, \\	
	&\mathrm{d}Y= \big[AY+B u+C_1 Z_1+C_2 Z_2\big]\, \mathrm{d}t+Z_1\,\mathrm{d}W_1+Z_2\,\mathrm{d}W_2, \\
	&X(0)= 0, \quad Y(T)=\xi, \\
	&S_3\widehat{Y} - B^{\top}\widehat{X} + Ru = 0.
\end{aligned}\end{cases}
\end{equation}
When (H3) holds, it follows from Corollary~\ref{coro2} that $R(\cdot) \gg 0$, which implies that $R(\cdot)$ is invertible.
Thus, we obtain from the last equation in \eqref{FBSDE:Hamilton System}
\begin{equation} \label{DECOUPLING:u=R}
	u(\cdot) = -R(\cdot)^{-1}\big[S_3(\cdot)\widehat{Y}(\cdot) - B(\cdot)^{\top}\widehat{X}(\cdot)\big].
\end{equation}
Substituting it into \eqref{FBSDE:Hamilton System} leads to
\begin{equation} \label{FBSDE-F}
	\begin{cases}\begin{aligned}
			&\mathrm{d}X= \big[-A^{\top} X+S_1^{\top}Z_1+S_2^{\top}Z_2+S_3^{\top}u\big] \,\mathrm{d}t \\
			&\quad\quad\,\, +\big[-C_1^{\top} X + S_1Y + N_1Z_1\big] \,\mathrm{d}W_1 \\
			&\quad\quad\,\, +\big[-C_2^{\top} X + S_2Y + N_2 Z_2\big]\, \mathrm{d}W_2, \\	
			&\mathrm{d}Y= \big[AY-BR^{-1}S_3\widehat{Y}+BR^{-1}B^{\top}\widehat{X}+C_1 Z_1+C_2 Z_2\big]\, \mathrm{d}t\\
			&\quad\quad\,\, +Z_1\,\mathrm{d}W_1+Z_2\,\mathrm{d}W_2, \\
			&X(0)= 0, \quad Y(T)=\xi, \\
	\end{aligned}\end{cases}
\end{equation}
which is actually coupled and incorporates filtering. In order to decouple the FBSDE with filtering, similarly to Wang et al.~\cite{PLQ8}, we assume that
\begin{equation} \label{DECOUPLING: Y=GX}
Y(\cdot) = -\varGamma(\cdot)\widehat{X}(\cdot) + \varphi(\cdot),
\end{equation}
where $\varGamma(\cdot)$ is a deterministic matrix-valued function and $\varphi(\cdot)$ is a stochastic process that satisfies the following BSDE
\[
\left\{\begin{aligned}
	\mathrm{d}\varphi(t) &= \alpha(t) \,\mathrm{d}t + {\beta}_1(t) \,\mathrm{d}W_1 + {\beta}_2(t) \,\mathrm{d}W_2,\\
	\varphi(T) & = \xi
\end{aligned}\right.
\]
where $\alpha(\cdot)$ will be determined below.
 
Applying Itô's formula to \eqref{DECOUPLING: Y=GX}, we have
\[
	\begin{aligned} 
		0= & -\mathrm{d}Y-\dot{\varGamma} \widehat{X} \,\mathrm{d}t-\varGamma \,\mathrm{d}\widehat{X} + \mathrm{d}\varphi \\
		= & -(AY + Bu + C_1 Z_1 + C_2 Z_2) \,\mathrm{d}t -Z_1 \,\mathrm{d}W_1 - Z_2 \,\mathrm{d}W_2- \dot{\varGamma} \widehat{X} \,\mathrm{d}t\\
		& - \varGamma(-A^{\top} \widehat{X}+S_1^{\top}\widehat{Z}_1+S_2^{\top}\widehat{Z}_2+S_3^{\top}u)\,\mathrm{d}t \\
		& - \varGamma(-C_2^{\top} \widehat{X}+S_2\widehat{Y}+N_2\widehat{Z}_2)\,\mathrm{d}W_2 + \alpha \,\mathrm{d}t + {\beta}_1 \,\mathrm{d}W_1+ {\beta}_2 \,\mathrm{d}W_2,
	\end{aligned} 
\]
from which we obtain
\begin{equation} \label{DECOUPLING:each term be zero}
	\begin{cases} 
	\dot{\varGamma}\widehat{X} + AY - \varGamma A^{\top}\widehat{X} + (B+\varGamma S^{\top}_3)u + C_1 Z_1 + \varGamma S_1^{\top}\widehat{Z}_1+ C_2 Z_2 + \varGamma S_2^{\top} \widehat{Z}_2 - \alpha= 0, \\
	Z_1 - {\beta}_1=0, \\
	Z_2 + \varGamma (-C_2^{\top} \widehat{X}+S_2\widehat{Y}+N_2\widehat{Z}_2) - {\beta}_2=0.
	\end{cases}
\end{equation}
For convenience, we adopt the following notations:
\[
\begin{aligned}
	&\mathcal{N}_{\varGamma}(\cdot) = I + \varGamma(\cdot) N_2(\cdot), \\
	&\mathcal{B}_{\varGamma}(\cdot) = B(\cdot) + \varGamma(\cdot) S_3(\cdot)^{\top},\\
	&\mathcal{C}_{\varGamma}(\cdot) = C_2(\cdot) + \varGamma(\cdot) S_2(\cdot)^{\top}.
\end{aligned}
\]
Here, we temporarily assume that $\mathcal{N}_{\varGamma}(\cdot)$ is invertible (its invertibility will be established in Theorem \ref{thm:bRiccati} below). We further obatin
\begin{equation} \label{DECOUPLING:Z1Z2}
	\begin{cases} 
	Z_1  = {\beta}_1, \\
	Z_2  = \mathcal{N}_{\varGamma}^{-1}\left(\varGamma \mathcal{C}_{\varGamma}^{\top} \widehat{X} - \varGamma S_2\widehat{\varphi} + {\widehat{\beta}}_2\right) + {\beta}_2 - {\widehat{\beta}}_2.
	\end{cases}
\end{equation}
By substituting \eqref{DECOUPLING:u=R}, \eqref{DECOUPLING: Y=GX}, \eqref{DECOUPLING:Z1Z2} into the first equation of \eqref{DECOUPLING:each term be zero}, we finally obtain
\[
\begin{aligned}
&(
\dot{\varGamma} -A \varGamma-\varGamma A^{\top} + \mathcal{B}_{\varGamma}R^{-1} \mathcal{B}_{\varGamma}^{\top}
	+\mathcal{C}_{\varGamma}\mathcal{N}_{\varGamma}^{-1} \varGamma \mathcal{C}_{\varGamma}^{\top} 
)\widehat{X} -\alpha + A\varphi + C_1{\beta}_1 + C_2{\beta}_2 \\
&\quad - (\mathcal{B}_{\varGamma}R^{-1}S_3 + \mathcal{C}_{\varGamma}\mathcal{N}_{\varGamma}^{-1}\varGamma S_2)\widehat{\varphi} 
+\varGamma S_1^{\top} \widehat{\beta}_1  + (\mathcal{C}_{\varGamma}\mathcal{N}_{\varGamma}^{-1} - C_2 )\widehat{\beta}_2 = 0,
\end{aligned}
\]
which yields the following matrix-valued differential equation
\begin{equation} \label{DECOUPLING:Riccati Equation}
	\begin{cases}
	\dot{\varGamma} -A \varGamma-\varGamma A^{\top}+\mathcal{B}_{\varGamma}R^{-1} \mathcal{B}_{\varGamma}^{\top}+\mathcal{C}_{\varGamma}{\mathcal{N}}_{\varGamma}^{-1} \varGamma \mathcal{C}_{\varGamma}^{\top}=0,\\
	\varGamma(T) = 0,
	\end{cases}
\end{equation}
and $\varphi(\cdot)$ is the solution to the following BSDE with filtering
\begin{equation} \label{DECOUPLING:adjoint BSDE}
	\begin{cases}
	\mathrm{d}\varphi = \big[A\varphi + C_1{\beta}_1 + C_2{\beta}_2
	- (\mathcal{B}_{\varGamma}R^{-1}S_3 + \mathcal{C}_{\varGamma}\mathcal{N}_{\varGamma}^{-1}\varGamma S_2)\widehat{\varphi} \\
	\quad\quad\,\,\,+ \varGamma S_1^{\top} \widehat{\beta}_1  + (\mathcal{C}_{\varGamma}\mathcal{N}_{\varGamma}^{-1} - C_2 )\widehat{\beta}_2\big] \,\mathrm{d}t + {\beta}_1 \,\mathrm{d}W_1 + {\beta}_2 \,\mathrm{d}W_2,\\
	\varphi(T) = \xi.
	\end{cases}
\end{equation}

Comparing the coefficients between the state equation~\eqref{BSLQ:SE} and the matrix-valued differential equation \eqref{DECOUPLING:Riccati Equation},
we find that $C_1(\cdot)$ is not involved in the equation. Moreover, it can be seen from \eqref{DECOUPLING:Z1Z2} that process $Z_1(\cdot)$
is completely determined by $\beta_1(\cdot)$. These happen because we can only observe partial information $\mathbb{G}$ rather than complete information $\mathbb{F}$.
The following theorem establishes the solvability of equation~\eqref{DECOUPLING:Riccati Equation} and BSDE~\eqref{DECOUPLING:adjoint BSDE}.

\begin{Theorem} \label{thm:bRiccati}
	Let (H1) -- (H3) and \eqref{simplification} hold. Then, there exists $\varGamma(\cdot)\in C([0,T];\mathbb{S}_{+}^n)$ 
	such that $\mathcal{N}_{\varGamma}(\cdot)$ is invertible and $\mathcal{N}_{\varGamma}(\cdot)^{-1}\in L^{\infty}(0,T;\mathbb{R}^{n \times n})$. 
	Moreover,  $\varGamma(\cdot)$ is the unique solution to equation \eqref{DECOUPLING:Riccati Equation}.
	Consequently, BSDE with filtering~\eqref{DECOUPLING:adjoint BSDE} admits a unique solution $(\varphi(\cdot), \beta_1(\cdot), \beta_2(\cdot)) \in S_{\mathbb{F}}^2(0, T; \mathbb{R}^n) \times L_{\mathbb{F}}^2(0, T; \mathbb{R}^n) \times L_{\mathbb{F}}^2(0, T; \mathbb{R}^n)$.
\end{Theorem}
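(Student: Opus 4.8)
The strategy is a limiting procedure built on Problem (FSLQ-P$_\lambda$), in the spirit of \cite{ILQ6} but adapted to partial information. Fix $\lambda\ge\lambda_0$ and let $\mathcal{P}_{1\lambda}(\cdot),\mathcal{P}_{2\lambda}(\cdot)$ be the solutions of \eqref{FSLQn:RE1}, \eqref{FSLQn:RE2} furnished by Corollary~\ref{coro2}. I would first upgrade $\mathcal{P}_{2\lambda}(\cdot)\ge0$ (Proposition~\ref{prop:P_lambda increasing}) to $\mathcal{P}_{2\lambda}(\cdot)>0$: since $G=0$, Theorem~\ref{thm:bfr} gives $\mathcal{J}_\lambda(x;u,v_1,v_2)\ge\alpha_0\,\mathbb{E}\int_0^T(|u|^2+|v_1|^2+|v_2|^2)\,\mathrm{d}t$, so $\mathcal{V}_\lambda(x)=0$ forces the optimal control of Problem (FSLQ-P$_\lambda$) to vanish, hence forces the state to be the deterministic solution of $\dot{X}=AX$, $X(0)=x$; with $Q=0$ this makes the cost equal to $\lambda|X(T)|^2$, which is positive for $x\ne0$, a contradiction. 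Thus $\mathcal{P}_{2\lambda}(0)>0$, and the propagation identity from the proof of Proposition~\ref{prop:P_lambda increasing} (with $\mathcal{Q}_\lambda\ge0$) yields $\mathcal{P}_{2\lambda}(t)>0$ on $[0,T]$. Set $\varGamma_\lambda:=\mathcal{P}_{2\lambda}^{-1}\in C([0,T];\widehat{\mathbb{S}}_{+}^{n})$. Differentiating $\mathcal{P}_{2\lambda}^{-1}$ via \eqref{FSLQn:RE2}, \eqref{FSLQn:RE1} and \eqref{simplification} shows that $\varGamma_\lambda$ solves
\[
\dot{\varGamma}_\lambda-A\varGamma_\lambda-\varGamma_\lambda A^{\top}+\mathcal{B}_{\varGamma_\lambda}R^{-1}\mathcal{B}_{\varGamma_\lambda}^{\top}+\mathcal{C}_{\varGamma_\lambda}\mathcal{N}_{\varGamma_\lambda}^{-1}\varGamma_\lambda\mathcal{C}_{\varGamma_\lambda}^{\top}+\mathcal{C}_{\varGamma_\lambda}^{(1)}(N_1+\mathcal{P}_{1\lambda})^{-1}(\mathcal{C}_{\varGamma_\lambda}^{(1)})^{\top}=0,\quad\varGamma_\lambda(T)=\tfrac{1}{\lambda}I,
\]
with $\mathcal{C}_{\varGamma_\lambda}^{(1)}:=C_1+\varGamma_\lambda S_1^{\top}$, i.e.\ exactly \eqref{DECOUPLING:Riccati Equation} with one extra term carrying $(N_1+\mathcal{P}_{1\lambda})^{-1}$ and terminal value $\tfrac{1}{\lambda}I$. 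Along the way, $\mathcal{N}_{\varGamma_\lambda}=\mathcal{P}_{2\lambda}^{-1}(\mathcal{P}_{2\lambda}+N_2)$ is invertible by \eqref{coro2: uniformly positive}, with $\mathcal{N}_{\varGamma_\lambda}^{-1}\varGamma_\lambda=(N_2+\mathcal{P}_{2\lambda})^{-1}$ and $\mathcal{N}_{\varGamma_\lambda}^{-1}=I-(N_2+\mathcal{P}_{2\lambda})^{-1}N_2$.

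Next I would collect the $\lambda$-uniform bounds needed to pass to the limit. Since $Q=0$, \eqref{FSLQn:RE1} gives $\mathcal{P}_{1\lambda}=\lambda\Psi$ with $\Psi\gg0$ (Lemma~\ref{lemma:Req1}), so $(N_1+\mathcal{P}_{1\lambda})^{-1}\to0$ uniformly on $[0,T]$. Because the uniform-convexity constant $\alpha_0$ in Theorem~\ref{thm:bfr} does not depend on $\lambda$, the matrix in \eqref{coro2: uniformly positive} is $\ge\alpha_0 I$ for all $\lambda\ge\lambda_0$, hence $0<(N_2+\mathcal{P}_{2\lambda})^{-1}\le\alpha_0^{-1}I$ uniformly. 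By Proposition~\ref{prop:P_lambda increasing}, $\lambda\mapsto\mathcal{P}_{2\lambda}$ is increasing, so $\lambda\mapsto\varGamma_\lambda$ is decreasing with $0\le\varGamma_\lambda\le\varGamma_{\lambda_0}$ and $\lambda\mapsto(N_2+\mathcal{P}_{2\lambda})^{-1}$ is decreasing; feeding these bounds back into the displayed equation shows $\{\dot{\varGamma}_\lambda\}_{\lambda\ge\lambda_0}$ is uniformly bounded, so $\{\varGamma_\lambda\}$ is equi-Lipschitz. By Arzel\`a--Ascoli together with the monotonicity in $\lambda$, $\varGamma_\lambda\to\varGamma$ uniformly on $[0,T]$ for some $\varGamma\in C([0,T];\mathbb{S}_{+}^{n})$, while $(N_2+\mathcal{P}_{2\lambda})^{-1}\to L$ pointwise for some bounded $L\ge0$.

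It then remains to pass to the limit and identify $L$. In the integral form of the displayed equation the extra term vanishes (as $(N_1+\mathcal{P}_{1\lambda})^{-1}\to0$) and the remaining terms converge by dominated convergence, giving $\varGamma(t)=\int_t^T[-A\varGamma-\varGamma A^{\top}+\mathcal{B}_{\varGamma}R^{-1}\mathcal{B}_{\varGamma}^{\top}+\mathcal{C}_{\varGamma}L\mathcal{C}_{\varGamma}^{\top}]\,\mathrm{d}s$. The crux — and, because $\varGamma$ is genuinely degenerate (e.g.\ $\varGamma(T)=0$), the step I expect to be the main obstacle — is to show $\mathcal{N}_{\varGamma}=I+\varGamma N_2$ is invertible, so that $\mathcal{C}_{\varGamma}L\mathcal{C}_{\varGamma}^{\top}=\mathcal{C}_{\varGamma}\mathcal{N}_{\varGamma}^{-1}\varGamma\mathcal{C}_{\varGamma}^{\top}$ and the limit equation coincides with \eqref{DECOUPLING:Riccati Equation}. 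For this I would pass to the limit in $\mathcal{N}_{\varGamma_\lambda}\mathcal{N}_{\varGamma_\lambda}^{-1}=\mathcal{N}_{\varGamma_\lambda}^{-1}\mathcal{N}_{\varGamma_\lambda}=I$, using $\mathcal{N}_{\varGamma_\lambda}\to\mathcal{N}_{\varGamma}$ (uniform) and $\mathcal{N}_{\varGamma_\lambda}^{-1}=I-(N_2+\mathcal{P}_{2\lambda})^{-1}N_2\to I-LN_2$ (pointwise), which gives $\mathcal{N}_{\varGamma}(I-LN_2)=(I-LN_2)\mathcal{N}_{\varGamma}=I$; hence $\mathcal{N}_{\varGamma}$ is invertible with $\mathcal{N}_{\varGamma}^{-1}=I-LN_2$, which is continuous and therefore bounded, and then $L=\mathcal{N}_{\varGamma}^{-1}\varGamma$ follows by taking the limit in $\mathcal{N}_{\varGamma_\lambda}(N_2+\mathcal{P}_{2\lambda})^{-1}=\varGamma_\lambda$. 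So $\varGamma\in C([0,T];\mathbb{S}_{+}^{n})$ solves \eqref{DECOUPLING:Riccati Equation} and $\mathcal{N}_{\varGamma}^{-1}\in L^{\infty}(0,T;\mathbb{R}^{n\times n})$.

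Uniqueness of the solution to \eqref{DECOUPLING:Riccati Equation} is a Gronwall estimate on the difference of two solutions, using that $\varGamma\mapsto\mathcal{N}_{\varGamma}^{-1}$ is locally Lipschitz wherever the inverse stays bounded (indeed $\mathcal{N}_{\varGamma'}^{-1}-\mathcal{N}_{\varGamma''}^{-1}=\mathcal{N}_{\varGamma'}^{-1}(\varGamma''-\varGamma')N_2\mathcal{N}_{\varGamma''}^{-1}$). Finally, with $\varGamma(\cdot)$ and $\mathcal{N}_{\varGamma}(\cdot)^{-1}$ bounded, all coefficients in \eqref{DECOUPLING:adjoint BSDE} belong to $L^{\infty}$; since $f\mapsto\mathbb{E}[f(\cdot)\mid\mathcal{G}_{\cdot}]$ is a bounded linear operator on $L_{\mathbb{F}}^2(0,T;\mathbb{R}^n)$, the driver of \eqref{DECOUPLING:adjoint BSDE} is Lipschitz in $(\varphi,\beta_1,\beta_2)$, and a contraction-mapping argument — as for linear BSDEs with filtering in \cite{PLQ7,PLQ8} — yields the unique solution $(\varphi,\beta_1,\beta_2)\in S_{\mathbb{F}}^2(0,T;\mathbb{R}^n)\times L_{\mathbb{F}}^2(0,T;\mathbb{R}^n)\times L_{\mathbb{F}}^2(0,T;\mathbb{R}^n)$.
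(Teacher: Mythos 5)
Your proposal is correct and follows essentially the same route as the paper: invert the forward Riccati solutions $\mathcal{P}_{2\lambda}$, use their monotonicity in $\lambda$ to get a decreasing, uniformly bounded family $\varGamma_\lambda=\mathcal{P}_{2\lambda}^{-1}$, pass to the limit in the integral form of the $\varGamma_\lambda$-equation (the $C_1$-term vanishing because $\mathcal{P}_{1\lambda}$ blows up, i.e.\ $\varSigma_\lambda=\mathcal{P}_{1\lambda}^{-1}\to 0$), obtain invertibility of $\mathcal{N}_{\varGamma}$ with $L^\infty$ inverse from a $\lambda$-uniform bound anchored at the level $\lambda_0$, prove uniqueness by a Gronwall argument, and then solve the filtered BSDE by standard linear (contraction) arguments as in \cite{PLQ8}. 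The only tactical deviations --- your explicit identity $\mathcal{N}_{\varGamma_\lambda}^{-1}=I-(N_2+\mathcal{P}_{2\lambda})^{-1}N_2$ and the monotone bound $(N_2+\mathcal{P}_{2\lambda})^{-1}\leqslant(N_2+\mathcal{P}_{2\lambda_0})^{-1}$ in place of the paper's operator estimate on $(I_{2n}+\varXi_\lambda N)^{-1}$, and your value-function argument for $\mathcal{P}_{2\lambda}(\cdot)>0$ where the paper simply uses $\mathcal{P}_{2\lambda}>\mathcal{P}_{2\lambda_0}\geqslant 0$ --- are minor and do not change the substance.
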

\begin{proof}
	Proposition~\ref{prop:P_lambda increasing} shows that for $\lambda > {\lambda}_0$, ${\mathcal{P}_1}_{\lambda}(\cdot)$ and ${\mathcal{P}_2}_{\lambda}(\cdot)$ are positive definite and increasing in $\lambda$. We define
\[
	\varSigma_{\lambda}(\cdot) = \mathcal{P}_{1\lambda}(\cdot)^{-1}, \quad \varGamma_{\lambda}(\cdot) = \mathcal{P}_{2\lambda}(\cdot)^{-1},
\]
both of which are decreasing in $\lambda$ and bounded below by 0. Therefore, the family $\{\varSigma_{\lambda}(\cdot)\}_{\lambda > \lambda_{0}}$ and 
$\{\varGamma_{\lambda}(\cdot)\}_{\lambda > \lambda_{0}}$ are uniformly bounded and converge pointwise to some positive semi-definite functions $\varSigma(\cdot)$ and $\varGamma(\cdot)$, respectively. We will prove in three steps that the $\varGamma(\cdot)$ is the desired solution.

\medskip

{\bf Step 1: $\mathcal{N}_{\varGamma}(\cdot):=I_n+\varGamma(\cdot)N_2(\cdot)$ is invertible and $\mathcal{N}_{\varGamma}(\cdot)^{-1}\in L^{\infty}(0,T;\mathbb{R}^{n \times n})$.}

\medskip

We first investigate the properties of $\varSigma(\cdot)$. 
Note that $\varSigma_{\lambda}(\cdot)\mathcal{P}_{1\lambda}(\cdot) = I_n$.
With the identify
\[
	\dot{\varSigma}_{\lambda}(t) \mathcal{P}_{1\lambda}(t) + {\varSigma}_{\lambda}(t) \dot{\mathcal{P}}_{1\lambda}(t) = \frac{\mathrm{d}}{\mathrm{d}t}({\varSigma}_{\lambda}(t) \mathcal{P}_{1\lambda}(t)) = 0,
\]
we have
\[
\begin{aligned}
	\varSigma(t) &= \lim_{\lambda \rightarrow \infty}{\varSigma}_{\lambda}(t) \\
	&= \lim_{\lambda \rightarrow \infty}\left\{\frac{1}{\lambda} I_n - \int_{0}^{t}\big[{\varSigma}_{\lambda}(s)\dot{\mathcal{P}}_{1\lambda}(s){\varSigma}_{\lambda}(s)\big]\, \mathrm{d}s\right\} \\
	&= \lim_{\lambda \rightarrow \infty}\left\{\frac{1}{\lambda} I_n + \int_{0}^{t}\big[{\varSigma}_{\lambda}(s)({\mathcal{P}_1}_\lambda(s) A(s) + A(s)^{\top}{\mathcal{P}_1}_\lambda(s)){\varSigma}_{\lambda}(s)\big]\, \mathrm{d}s\right\} \\
	&= \lim_{\lambda \rightarrow \infty}\left\{\frac{1}{\lambda} I_n + \int_{0}^{t}\big[A(s){\varSigma}_{\lambda}(s) + {\varSigma}_{\lambda}(s)A(s)^{\top}\big]\, \mathrm{d}s\right\}\\
	&= \int_{0}^{t}\big[A(s){\varSigma}(s) + {\varSigma}(s)A(s)^{\top}\big]\, \mathrm{d}s,
\end{aligned}
\]
where the last equality is guaranteed by the dominated convergence theorem.
Thus $\varSigma(\cdot) \in C([0,T];\mathbb{S}_{+}^n)$ and satisfies
\[
	\left\{\begin{aligned}
		&\dot{\varSigma} -A \varSigma-\varSigma A^{\top} = 0, \\
		&\varSigma(T) = 0,
	\end{aligned}\right.
\]
which implies that $\varSigma(\cdot) = 0$.

For convenience, let
\[
	N = \begin{pmatrix} N_1 & 0\\ 0 & N_2 \end{pmatrix},\quad \mathcal{P}_\lambda = \begin{pmatrix}\mathcal{P}_{1\lambda} & 0\\ 0 & \mathcal{P}_{2\lambda}\end{pmatrix},\quad \varXi_\lambda = \begin{pmatrix}\varSigma_{\lambda} & 0\\ 0 & \varGamma_{\lambda}\end{pmatrix}.
\]
By Corollary~\ref{coro2} and Proposition~\ref{prop:P_lambda increasing}, for each $\lambda > \lambda_0$, we get
\[
	\mathcal{P}_\lambda (\varXi_\lambda N + I_{2n})=N + \mathcal{P}_\lambda \gg 0,
\]
and 
\[
N + \mathcal{P}_\lambda > N + \mathcal{P}_{\lambda_0}.
\]
Then $\varXi_\lambda N + I_{2n}$ is invertible and 
$|(N + \mathcal{P}_\lambda)^{-1}| < |(N + \mathcal{P}_{\lambda_0})^{-1}|$. Hence, for 
any $x\in\mathbb{R}^n$,
\[
\begin{aligned}
\left\lvert (I_{2n} + \varXi_\lambda N)^{-1} x \right\rvert^2 &= \left\lvert (\mathcal{P}_\lambda + N)^{-1} \mathcal{P}_\lambda x\right\rvert^2\\
&\leqslant 2\left\lvert (\mathcal{P}_\lambda + N)^{-1} (\mathcal{P}_\lambda - \mathcal{P}_{\lambda_0}) x\right\rvert^2 + 2\left\lvert (\mathcal{P}_\lambda + N)^{-1} \mathcal{P}_{\lambda_0} x\right\rvert^2\\
&= 2\left\lvert x -(\mathcal{P}_\lambda + N)^{-1} (\mathcal{P}_{\lambda_0} + N) x\right\rvert^2 + 2\left\lvert (\mathcal{P}_\lambda + N)^{-1} \mathcal{P}_{\lambda_0} x\right\rvert^2\\
&\leqslant 4\left[1 + \left\lvert (\mathcal{P}_{\lambda_0} + N)^{-1}\right\lvert^2\left(\left\lvert \mathcal{P}_{\lambda_0} + N\right\lvert^2 + \left\lvert \mathcal{P}_{\lambda_0} \right\lvert^2\right)\right]\left\lvert x \right\lvert^2\\
&\leqslant K\left\lvert x \right\lvert^2,
\end{aligned}
\]
where $K>0$ is a constant independent of $\lambda$. Therefore,
\[
	\left\lvert I_{2n} + \varXi_\lambda N \right\lvert \geqslant \frac{1}{\left\lvert(I_{2n} + \varXi_\lambda N)^{-1}\right\lvert}
	\geqslant \frac{1}{\sqrt{K}} =: \delta_0,
\]
which further implies that for each $\lambda > \lambda_{0}$,
\begin{equation} \label{inequality for R_SIG}
	(I_{2n} + \varXi_\lambda N)(I_{2n} + \varXi_\lambda N)^{\top} \geqslant \delta_0^2 I_{2n}.
\end{equation}
Since
\[
\begin{aligned}
\lim_{\lambda \rightarrow \infty} (I_{2n} + \varXi_\lambda N) &= \lim_{\lambda \rightarrow \infty} \left[I_{2n} + \begin{pmatrix}\varSigma_{\lambda} & 0\\ 0 & \varGamma_{\lambda}\end{pmatrix}\begin{pmatrix} N_1 & 0\\ 0 & N_2 \end{pmatrix}\right]
= \begin{pmatrix} I_n & 0\\ 0 & \mathcal{N}_{\varGamma} \end{pmatrix},
\end{aligned}
\]
letting $\lambda \rightarrow \infty$ in \eqref{inequality for R_SIG}, we obtain
\[
  \mathcal{N}_{\varGamma}\mathcal{N}_{\varGamma}^{\top} \geqslant \delta_0^2I_{n}.
\]
This leads to the conclusion that $\mathcal{N}_{\varGamma}(\cdot)$ is invertible and $\mathcal{N}_{\varGamma}(\cdot)^{-1} \in L^{\infty}(0,T;\mathbb{R}^{n \times n})$.

\medskip

{\bf Step 2: $\varGamma(\cdot) \in C([0,T];\mathbb{S}_{+}^n)$ is a solution to equation \eqref{DECOUPLING:Riccati Equation}.}

\medskip

We have known that $\varGamma(\cdot)$ is positive semi-definite, it remains to show that $\varGamma(\cdot)$ is continuous and satisfies the equation \eqref{DECOUPLING:Riccati Equation}. 
Note that $\varGamma_{\lambda}(\cdot)\mathcal{P}_{2\lambda}(\cdot) = I_n$.
Using the identity
\[
	\dot{\varGamma}_{\lambda}(t) \mathcal{P}_{2\lambda}(t) + {\varGamma}_{\lambda}(t) \dot{\mathcal{P}}_{2\lambda}(t) = \frac{\mathrm{d}}{\mathrm{d}t}({\varGamma}_{\lambda}(t) \mathcal{P}_{2\lambda}(t)) = 0,
\]
it follows that
\[
\begin{aligned}
	\dot{{\varGamma}}_{\lambda} &= - {\varGamma}_{\lambda}\dot{\mathcal{P}}_{2\lambda}{\varGamma}_{\lambda} \\
&=  A{\varGamma}_{\lambda} + {\varGamma}_{\lambda}A^{\top} - 
  \begin{pmatrix}
	C_1^{\top} + S_1{\varGamma}_{\lambda}\\ C_2^{\top} + S_2{\varGamma}_{\lambda} \\ B^{\top} + S_3{\varGamma}_{\lambda}
	\end{pmatrix}^{\top}
  \begin{pmatrix}
	  N_1 + {\mathcal{P}_1}_\lambda & 0 & 0\\
	  0 & N_2+{\mathcal{P}_2}_\lambda & 0\\
	  0 & 0 & R
  \end{pmatrix}^{-1}
  \begin{pmatrix}
	  C_1^{\top} + S_1{\varGamma}_{\lambda}\\ C_2^{\top} + S_2{\varGamma}_{\lambda} \\ B^{\top} + S_3{\varGamma}_{\lambda}
  \end{pmatrix}\\
&= A {\varGamma}_{\lambda} + {\varGamma}_{\lambda} A^{\top} - 
\begin{pmatrix}
	C_1^{\top} + S_1{\varGamma}_{\lambda}\\ C_2^{\top} + S_2{\varGamma}_{\lambda} 
\end{pmatrix}^{\top}
\left[
I+
\begin{pmatrix}
	\varSigma_\lambda & 0\\
	0 & \varGamma_\lambda
\end{pmatrix}
\begin{pmatrix}
	N_1 & 0\\
	0 & N_2
\end{pmatrix}
\right]^{-1}
\begin{pmatrix}
	\varSigma_\lambda & 0\\
	0 & \varGamma_\lambda
\end{pmatrix}
\begin{pmatrix}
	C_1^{\top} + S_1{\varGamma}_{\lambda}\\ C_2^{\top} + S_2{\varGamma}_{\lambda} 
\end{pmatrix}\\
&\quad+
\begin{pmatrix}
B^{\top} + S_3{\varGamma}_{\lambda}
\end{pmatrix}^{\top}
R^{-1}
\begin{pmatrix}
	B^{\top} + S_3{\varGamma}_{\lambda}
\end{pmatrix}. \\
\end{aligned}
\]
Integrating the above equation from $t$ to $T$, and letting $\lambda \rightarrow \infty$,
we obtain the following result by the dominated convergence theorem
\[
\varGamma(t) = -\int_{t}^{T}(
A \varGamma + \varGamma A^{\top} - \mathcal{B}_{\varGamma}R^{-1} \mathcal{B}_{\varGamma}^{\top} - \mathcal{C}_{\varGamma}{\mathcal{N}_{\varGamma}}^{-1} \varGamma \mathcal{C}_{\varGamma}^{\top}	
) \,\mathrm{d}s.
\]
Therefore, $\varGamma(\cdot) \in C([0,T];\mathbb{S}_{+}^n)$ is a solution to \eqref{DECOUPLING:Riccati Equation}.

\medskip

{\bf Step 3: The uniqueness of the solution to equation \eqref{DECOUPLING:Riccati Equation}.}

\medskip

Assume that there exists another solution $\widetilde{\varGamma}(\cdot)$ and set $\varDelta(\cdot) = \varGamma(\cdot) - \widetilde{\varGamma}(\cdot)$, then 
\[
\begin{aligned}
	\dot{\varDelta}(t) =&\ A\varDelta + \varDelta A^{\top} - \mathcal{B}_{\widetilde{\varGamma}} R^{-1} S_3 \varDelta - \varDelta S_3^{\top}R^{-1}\mathcal{B}_{\varGamma}^{\top} - \varDelta S_2 \mathcal{N}_{\varGamma}^{-1} \varGamma \mathcal{C}_{\varGamma}^{\top} \\
	&- \mathcal{C}_{\widetilde{\varGamma}}\Big[\mathcal{N}_{\varGamma}^{-1}\varGamma\mathcal{C}_{\varGamma}^{\top} -  \mathcal{N}_{\widetilde{\varGamma}}^{-1}\widetilde{\varGamma}\mathcal{C}_{\widetilde{\varGamma}}^{\top}\Big] \\
	=&\ A\varDelta + \varDelta A^{\top} - \mathcal{B}_{\widetilde{\varGamma}} R^{-1} S_3 \varDelta - \varDelta S_3^{\top}R^{-1}\mathcal{B}_{\varGamma}^{\top} - \varDelta S_2 \mathcal{N}_{\varGamma}^{-1} \varGamma \mathcal{C}_{\varGamma}^{\top} \\
	&+ \mathcal{C}_{\widetilde{\varGamma}} \mathcal{N}_{\varGamma}^{-1} \varDelta N_2 \mathcal{N}_{\widetilde{\varGamma}}^{-1} \varGamma\mathcal{C}_{\varGamma}^{\top} - \mathcal{C}_{\widetilde{\varGamma}} \mathcal{N}_{\widetilde{\varGamma}}^{-1}\Big[\varDelta\mathcal{C}_{\varGamma}^{\top} + \widetilde{\varGamma} S_2 \varDelta\Big] \\
	=:&\ f(t,\varDelta(t)).
\end{aligned}
\]
Note that $\varDelta(T) = 0$ and $f(t,x)$ is Lipschitz continuous in $x$. A standard argument with the Gronwall inequality shows that $\varDelta(\cdot) = 0$, 
thereby establishing the uniqueness.

Based on the above proof, we have established the unique solvability of the matrix-valued differential equation \eqref{DECOUPLING:Riccati Equation}. Finally, since $\mathcal{N}_{\varGamma}(\cdot)^{-1}$ is bounded on $[0,T]$,
the unique solvability of BSDE with filtering~\eqref{DECOUPLING:adjoint BSDE} can be established in a similar approach to Lemma 4.1 in Wang et al.~\cite{PLQ8}. The detail are omitted for brevity.
This completes the proof.
\end{proof}
\subsubsection{Representation of the optimal control and the value function}

\begin{Theorem} \label{thm:OPTIMAL CONTROL}
	Let (H1) -- (H3) and \eqref{simplification} hold, and let $\xi \in L^2_{\mathcal{F}_T}(\Omega;\mathbb{R}^n)$ be given. Denote $\varGamma(\cdot)$ as the 
solution to the matrix-valued differential equation \eqref{DECOUPLING:Riccati Equation} and $(\varphi(\cdot), \beta_1(\cdot), \beta_2(\cdot))$ as the solution to BSDE with filtering \eqref{DECOUPLING:adjoint BSDE}.
Then the following SDE with filtering 
\begin{equation} \label{FINAL:SDE}
	\begin{cases}\begin{aligned}
		\mathrm{d} X(t) &=  \big[ -A(t)^{\top}X(t) + \widetilde{A}(t) \widehat{X}(t) + b(t) \big] \,\mathrm{d}t \\
		&\quad+\big[ -C_1(t)^{\top}X(t) + \widetilde{C}_1(t)\widehat{X}(t) +c_1(t) \big] \,\mathrm{d}W_1(t)\\ 
		&\quad+\big[ -C_2(t)^{\top}X(t) + \widetilde{C}_2(t) \widehat{X}(t) + c_2(t) \big] \,\mathrm{d}W_2(t), \\
		X(0)&=  0,
	\end{aligned}\end{cases}
\end{equation}
where
\begin{equation}\label{FINAL:Coe}
  \begin{aligned}
  	&\widetilde{A}= S_2^{\top} \mathcal{N}_{\varGamma}^{-1} \varGamma \mathcal{C}_{\varGamma}^{\top} + S_3^{\top} R^{-1} \mathcal{B}_{\varGamma}^{\top}, \quad \widetilde{C}_1=- S_1\varGamma, \quad \widetilde{C}_2 =N_{2}\mathcal{N}_{\varGamma}^{-1} \varGamma \mathcal{C}_{\varGamma}^{\top} - S_2\varGamma, \\
  	&b=- (S_2^{\top} \mathcal{N}_{\varGamma}^{-1} \varGamma S_2 + S_3^{\top} R^{-1} S_3) \widehat{\varphi} + S_1^{\top} \beta_1 + S_2^{\top}\beta_2 + S_2^{\top} (\mathcal{N}_{\varGamma}^{-1} - I)\widehat{\beta}_2, \\
  	&c_1=S_1\varphi + N_{1}\beta_1, \quad c_2=S_2\varphi- N_{2}\mathcal{N}_{\varGamma}^{-1} \varGamma S_2 \widehat{\varphi} + N_{2}\beta_2 + N_{2}(\mathcal{N}_{\varGamma}^{-1} - I)\widehat{\beta}_2,\\
  \end{aligned}
\end{equation}
admits a unique solution $X(\cdot) \in S^2_{\mathbb{F}}(0,T;\mathbb{R}^n)$.
Moreover, the unique optimal control of Problem (BSLQ-P) is given by
\begin{equation} \label{FINAL:OPTIMAL CONTROL}
	u(\cdot) = R(\cdot)^{-1}\big[\mathcal{B}_{\varGamma}(\cdot)^{\top}\widehat{X}(\cdot) - S_3(\cdot)\widehat{\varphi}(\cdot)\big].
\end{equation}
\end{Theorem}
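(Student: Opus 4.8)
The plan is first to establish the unique solvability of the SDE with filtering~\eqref{FINAL:SDE}, and then to verify that the feedback law~\eqref{FINAL:OPTIMAL CONTROL} satisfies the optimality characterization of Theorem~\ref{theorem: FBSDE}. For the first part, I would note that by (H1)--(H2), by Theorem~\ref{thm:bRiccati} (which provides $\varGamma(\cdot)\in C([0,T];\mathbb{S}^n_+)$ with $\mathcal{N}_{\varGamma}(\cdot)^{-1}\in L^{\infty}(0,T;\mathbb{R}^{n\times n})$), and by the integrability of $(\varphi,\beta_1,\beta_2)$, all the coefficients in~\eqref{FINAL:Coe} are well controlled: $\widetilde{A},\widetilde{C}_1,\widetilde{C}_2\in L^{\infty}(0,T;\mathbb{R}^{n\times n})$ and $b,c_1,c_2\in L^2_{\mathbb{F}}(0,T;\mathbb{R}^n)$. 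Since $\mathbb{G}$ is the natural filtration of $W_2$ and $W_1$ is independent of $W_2$, taking $\mathbb{E}[\,\cdot\mid\mathcal{G}_t]$ in~\eqref{FINAL:SDE} annihilates the $W_1$-stochastic integral and yields the closed linear SDE
\begin{equation*}
\mathrm{d}\widehat{X}(t)=\big[(\widetilde{A}(t)-A(t)^{\top})\widehat{X}(t)+\widehat{b}(t)\big]\,\mathrm{d}t+\big[(\widetilde{C}_2(t)-C_2(t)^{\top})\widehat{X}(t)+\widehat{c}_2(t)\big]\,\mathrm{d}W_2(t),\qquad \widehat{X}(0)=0,
\end{equation*}
which has bounded coefficients and $L^2_{\mathbb{G}}$ free terms, hence a unique solution $\widehat{X}(\cdot)\in S^2_{\mathbb{G}}(0,T;\mathbb{R}^n)$ by standard filtering arguments (cf.~\cite{PLQ8,PLQ10}). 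Substituting this $\widehat{X}(\cdot)$ back into~\eqref{FINAL:SDE} turns it into an ordinary linear SDE in $X$ with bounded coefficients and $L^2_{\mathbb{F}}$ free terms, uniquely solvable in $S^2_{\mathbb{F}}(0,T;\mathbb{R}^n)$; applying the filtering identity once more to this $X$ and invoking uniqueness for the $\widehat{X}$-equation shows that $\mathbb{E}[X(t)\mid\mathcal{G}_t]$ coincides with the $\widehat{X}(\cdot)$ just constructed, so $X(\cdot)$ genuinely solves~\eqref{FINAL:SDE}.

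For the optimality assertion, I would set $Y:=-\varGamma\widehat{X}+\varphi$, $Z_1:=\beta_1$, $Z_2:=\mathcal{N}_{\varGamma}^{-1}(\varGamma\mathcal{C}_{\varGamma}^{\top}\widehat{X}-\varGamma S_2\widehat{\varphi}+\widehat{\beta}_2)+\beta_2-\widehat{\beta}_2$, and let $u(\cdot)$ be given by~\eqref{FINAL:OPTIMAL CONTROL}; boundedness of $\varGamma,\mathcal{N}_{\varGamma}^{-1},\mathcal{B}_{\varGamma},\mathcal{C}_{\varGamma},R^{-1}$ places $(Y,Z_1,Z_2)$ in $S^2_{\mathbb{F}}\times L^2_{\mathbb{F}}\times L^2_{\mathbb{F}}$ and makes $u(\cdot)\in\mathcal{U}_{ad}$ since it is $\mathbb{G}$-adapted. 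The claim is that $(X,Y,Z_1,Z_2)$ solves the Hamiltonian system~\eqref{FBSDE:Hamilton System} and $u$ satisfies its last line. Verifying the forward $X$-equation amounts to substituting the formulas for $Y,Z_1,Z_2,u$ into the drift and the two diffusion terms of the $X$-line of~\eqref{FBSDE:Hamilton System} and collecting terms; this reproduces exactly the coefficients $\widetilde{A},\widetilde{C}_1,\widetilde{C}_2,b,c_1,c_2$ of~\eqref{FINAL:Coe}, so that line is precisely~\eqref{FINAL:SDE}, with $X(0)=0=GY(0)$ by~\eqref{simplification}. For the backward $Y$-equation I would apply It\^o's formula to $Y=-\varGamma\widehat{X}+\varphi$, insert the closed SDE for $\widehat{X}$, the matrix-valued equation~\eqref{DECOUPLING:Riccati Equation} for $\varGamma$ and the BSDE with filtering~\eqref{DECOUPLING:adjoint BSDE} for $\varphi$, and observe that this is exactly the reverse of the decoupling computation already carried out after the ansatz~\eqref{DECOUPLING: Y=GX}, so it collapses to $\mathrm{d}Y=[AY+Bu+C_1Z_1+C_2Z_2]\,\mathrm{d}t+Z_1\,\mathrm{d}W_1+Z_2\,\mathrm{d}W_2$ with $Y(T)=-\varGamma(T)\widehat{X}(T)+\varphi(T)=\xi$ because $\varGamma(T)=0$; no new identity is needed. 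Finally $Ru=\mathcal{B}_{\varGamma}^{\top}\widehat{X}-S_3\widehat{\varphi}=(B^{\top}+S_3\varGamma)\widehat{X}-S_3\widehat{\varphi}$ together with $\widehat{Y}=-\varGamma\widehat{X}+\widehat{\varphi}$ gives $S_3\widehat{Y}-B^{\top}\widehat{X}+Ru=0$, i.e.~\eqref{FBSDE: u* equation}.

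Thus condition (ii) of Theorem~\ref{theorem: FBSDE} holds, and condition (i) holds because (H3) gives $J(0;u(\cdot))\geqslant\delta\,\mathbb{E}\int_0^T|u(t)|^2\,\mathrm{d}t\geqslant 0$ for all $u(\cdot)\in\mathcal{U}_{ad}$; hence the $u(\cdot)$ in~\eqref{FINAL:OPTIMAL CONTROL} is an optimal control, and uniqueness follows since (H3) makes $u(\cdot)\mapsto J(\xi;u(\cdot))$ a uniformly (hence strictly) convex functional on $\mathcal{U}_{ad}$, so its minimizer is unique. I expect the only genuinely delicate point to be the filtering step in the first paragraph --- rigorously justifying that the $W_1$-martingale part vanishes under $\mathbb{E}[\,\cdot\mid\mathcal{G}_t]$ and that $\widehat{X}$ obeys the displayed closed SDE --- but this is standard for the present partial-information framework and can be cited; the remainder is bookkeeping, namely checking that the coefficients~\eqref{FINAL:Coe} are precisely those produced by decoupling~\eqref{FBSDE:Hamilton System} through~\eqref{DECOUPLING: Y=GX}, which Section~\ref{section:CONSTRUCT of the optimal control} has essentially already done.
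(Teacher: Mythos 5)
Your proposal is correct and follows essentially the same route as the paper: filter \eqref{FINAL:SDE} to get a closed linear SDE for $\widehat{X}$, solve it, substitute back and use uniqueness of the filtered equation to obtain the unique $X(\cdot)\in S^2_{\mathbb{F}}(0,T;\mathbb{R}^n)$, then define $(Y,Z_1,Z_2)$ by the decoupling formulas \eqref{Decouple}, verify via It\^o's formula (using \eqref{DECOUPLING:Riccati Equation}, \eqref{DECOUPLING:adjoint BSDE} and $\varGamma(T)=0$) that the Hamiltonian system \eqref{FBSDE:Hamilton System} is satisfied, and conclude optimality and uniqueness from Theorem~\ref{theorem: FBSDE} together with (H3). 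Your explicit mention of condition (i) and of strict convexity for uniqueness only makes explicit what the paper leaves implicit.
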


\begin{proof}
For the above SDE with filtering, by Theorem 8.1 in Liptser and Shiyayev~\cite{LS77}, we have
\begin{equation} \label{FINAL:SDE_HAT}
\begin{cases}
	\begin{aligned}
		\mathrm{d}\widehat{X}(t) &= \big[\widehat{A}(t) \widehat{X}(t) + \widehat{b}(t) \big] \,\mathrm{d}t +\big[\widehat{C}_2(t)\widehat{X}(t) + \widehat{c}_2(t) \big] \,\mathrm{d}W_2, \\
		\widehat{X}(0) &=  0,
	\end{aligned}
\end{cases}
\end{equation}
where
\begin{equation}\label{SDE_HAT:Coe}
 \begin{aligned}
 	&\widehat{A}=\widetilde{A}-A^{\top}, \quad
 	\widehat{C}_2=\widetilde{C}_2-C_2^{\top}, \\
 	&\widehat{b}=-(S_2^{\top} \mathcal{N}_{\varGamma}^{-1} \varGamma S_2 + S_3^{\top} R^{-1} S_3) \widehat{\varphi}+S_1^{\top} \widehat{\beta}_1 + S_2^{\top} \mathcal{N}_{\varGamma}^{-1}\widehat{\beta}_2,\\
 	&\widehat{c}_2=(I - N_{2}\mathcal{N}_{\varGamma}^{-1} \varGamma) S_2 \widehat{\varphi}+ N_{2}\mathcal{N}_{\varGamma}^{-1}\widehat{\beta}_2.
 \end{aligned}
 \end{equation}
Under the given conditions, $\widehat{A}(\cdot), \widehat{C}_2(\cdot) \in L^{\infty}(0,T;\mathbb{R}^n)$ and $\widehat{b}(\cdot), \widehat{c}_2(\cdot) \in L^2_{\mathbb{G}}(0,T;\mathbb{R}^n)$.
It follows from the classical theory of SDE that \eqref{FINAL:SDE_HAT} admits a unique solution $\widehat{X}(\cdot) \in S^2_{\mathbb{G}}(0,T;\mathbb{R}^n)$. Then, we consider the following SDE
\begin{equation} \label{Auxiliary:SDE}
\begin{cases}\begin{aligned}
\mathrm{d} x(t) &=  \big[ -A(t)^{\top}x(t) + \widetilde{A}(t) \widehat{X}(t) + b(t) \big] \,\mathrm{d}t \\
		&\quad+\big[ -C_1(t)^{\top}x(t) + \widetilde{C}_1(t)\widehat{X}(t) +c_1(t) \big] \,\mathrm{d}W_1(t)\\ 
		&\quad+\big[ -C_2(t)^{\top}x(t) + \widetilde{C}_2(t) \widehat{X}(t) + c_2(t) \big] \,\mathrm{d}W_2(t), \\
		x(0)&=  0,
\end{aligned}\end{cases}
\end{equation}
where the coefficients are given by \eqref{FINAL:Coe}. Clearly, \eqref{Auxiliary:SDE} admits a unique solution $x(\cdot) \in S^2_{\mathbb{F}}(0,T;\mathbb{R}^n)$, and $\widehat x(\cdot)=\mathbb E[x(\cdot)|\mathcal G_{\cdot}]$ is the unique solution to the following SDE
\begin{equation} \label{Auxiliary:SDE_HAT}
\begin{cases}
	\begin{aligned}
		\mathrm{d}\widehat{x}(t) &= \big[-A(t)^{\top}\widehat{x}(t)+\widetilde{A}(t)\widehat{X}(t) + \widehat{b}(t) \big] \,\mathrm{d}t \\
		&\quad+\big[-C_2(t)\widehat{x}(t)+\widetilde{C}_2(t)\widehat{X}(t) + \widehat{c}_2(t) \big] \,\mathrm{d}W_2, \\
		\widehat{x}(0) &=  0,
	\end{aligned}
\end{cases}
\end{equation}
where $\widehat b(\cdot)$ and $\widehat c_2(\cdot)$ are defined in \eqref{SDE_HAT:Coe}. Comparing \eqref{FINAL:SDE_HAT} with \eqref{Auxiliary:SDE_HAT} and using \eqref{SDE_HAT:Coe}, we deduce that $\widehat X(\cdot)=\widehat x(\cdot)$, and hence \eqref{FINAL:SDE} has a unique solution $X(\cdot) \in S^2_{\mathbb{F}}(0,T;\mathbb{R}^n)$.

Define
\begin{equation}\label{Decouple}
\begin{cases}
	Y = -\varGamma\widehat{X} + \varphi, \\
	Z_1  = {\beta}_1, \\
	Z_2  = \mathcal{N}_{\varGamma}^{-1} \Big( \varGamma \mathcal{C}_{\varGamma}^{\top} \widehat{X} - \varGamma S_2\widehat{\varphi} + {\widehat{\beta}}_2 \Big) + {\beta}_2 - {\widehat{\beta}}_2, 
\end{cases}
\end{equation}
together with \eqref{FINAL:OPTIMAL CONTROL}, SDE~\eqref{FINAL:SDE} can be rewritten as
\[
\begin{cases}\begin{aligned}
		\mathrm{d} X(t) &=  \big[ -A(t)^{\top}X(t) + S_1(t)^{\top}Z_1(t)+S_2(t)^{\top}Z_2(t)+S_3(t)^{\top}u(t) \big] \,\mathrm{d}t \\
		&\quad+\big[ -C_1(t)^{\top}X(t) + S_1(t)Y(t)+N_1(t)Z_1(t) \big] \,\mathrm{d}W_1(t)\\ 
		&\quad+\big[ -C_2(t)^{\top}X(t) + S_2(t)Y(t)+N_2(t)Z_2(t) \big] \,\mathrm{d}W_2(t), \\
		X(0)&=  0.
\end{aligned}\end{cases}
\]
Applying Itô's formula to $Y$ yields
\[
\begin{aligned}
\mathrm{d}Y &= -\dot{\varGamma} \widehat{X} \,\mathrm{d}t-\varGamma \,\mathrm{d}\widehat{X} + \mathrm{d}\varphi \\
&= \Big[\big(- A\varGamma + BR^{-1}\mathcal{B}_\varGamma^{\top} + C_2\mathcal{N}_\varGamma^{-1}\varGamma\mathcal{C}_\varGamma^{\top}\big)\widehat{X} + A\varphi - \big(BR^{-1}S_3 + C_2\mathcal{N}_\varGamma^{-1}\varGamma S_2\big)\widehat{\varphi}\\ 
&\quad+ C_1\beta_1 + C_2\beta_2 + C_2\big(\mathcal{N}_\varGamma^{-1}-I\big)\widehat{\beta}_2\Big] \,\mathrm{d}t + \beta_1 \,\mathrm{d}W_1\\
&\quad+ \Big[ \mathcal{N}_{\varGamma}^{-1} \big( \varGamma \mathcal{C}_{\varGamma}^{\top} \widehat{X} - \varGamma S_2\widehat{\varphi} + {\widehat{\beta}}_2 \big) + {\beta}_2 - {\widehat{\beta}}_2 \Big] \,\mathrm{d}W_2,
\end{aligned} 
\]
from which it follows that $(Y(\cdot),Z_1(\cdot),Z_2(\cdot))$ defined by \eqref{Decouple} indeed satisfies \eqref{BSLQ:SE}.
Moreover, we obtain from \eqref{FINAL:OPTIMAL CONTROL} that
\[
S_3(\cdot)\widehat{Y}(\cdot) - B^{\top}(\cdot)\widehat{X}(\cdot) + R(\cdot)u(\cdot) = 0.
\]
Thus, $(X(\cdot),Y(\cdot),Z_1(\cdot),Z_2(\cdot),u(\cdot))$ satisfies the Hamiltonian system \eqref{FBSDE:Hamilton System}. Theorem~\ref{theorem: FBSDE} ensures that $u(\cdot)$ defined by \eqref{FINAL:OPTIMAL CONTROL} is the unique optimal control of Problem (BSLQ-P).
\end{proof}

\begin{Theorem}
	Let (H1) -- (H3) and \eqref{simplification} hold. The value function of Problem (BSLQ-P) is given by
\[
\begin{aligned}
V(\xi) &= \mathbb{E}\int_{0}^{T}\Big\{-\Big\langle (S_2^{\top}\mathcal{N}_{\varGamma}^{-1}\varGamma S_2 + S_3^{\top}R^{-1}S_3)\widehat{\varphi} - 2S_2^{\top}(\mathcal{N}_{\varGamma}^{-1} - I)\widehat{\beta}_2,\, \widehat{\varphi}\Big\rangle \\
&\quad + 2\Big\langle S_1^{\top}\beta_1 + S_2^{\top}\beta_2,\, \varphi\Big\rangle + \langle N_{1}\beta_1,\, \beta_1\rangle + \langle N_{2}\beta_2,\, \beta_2\rangle + \Big\langle N_{2}(\mathcal{N}_{\varGamma}^{-1} - I)\widehat{\beta}_2,\, \widehat{\beta}_2 \Big\rangle\Big\} \,\mathrm{d}t.
\end{aligned}
\]
where $\varGamma(\cdot)$ is the solution to the matrix-valued differential equation \eqref{DECOUPLING:Riccati Equation} and 
$(\varphi(\cdot), \beta_1(\cdot), \beta_2(\cdot))$ is the solution to BSDE with filtering \eqref{DECOUPLING:adjoint BSDE}.
\end{Theorem}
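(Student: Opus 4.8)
The plan is to evaluate $V(\xi)=J(\xi;u)$ by substituting the optimal control \eqref{FINAL:OPTIMAL CONTROL} and the corresponding optimal quadruple $(Y,Z_1,Z_2,u)$ — given through the decoupling relations \eqref{Decouple} — into the cost functional. Under \eqref{simplification} the functional \eqref{BSLQ:CF} collapses to
\[
V(\xi)=\mathbb{E}\int_0^T\big[\langle N_1Z_1,Z_1\rangle+\langle N_2Z_2,Z_2\rangle+2\langle S_1Y,Z_1\rangle+2\langle S_2Y,Z_2\rangle+2\langle S_3Y,u\rangle+\langle Ru,u\rangle\big]\,\mathrm{d}t,
\]
so the entire task is to carry out this substitution and simplify. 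Since $Z_1=\beta_1$, $Y=-\varGamma\widehat{X}+\varphi$, while $Z_2$ and $u$ both carry the filter $\widehat{X}$ of the state of \eqref{FINAL:SDE}, the expansion produces terms quadratic in $\widehat{X}$, terms linear in $\widehat{X}$ paired with $\widehat{\varphi},\widehat{\beta}_1,\widehat{\beta}_2$, and terms free of $\widehat{X}$; since the asserted formula contains none of the first two kinds, the real content is showing that all $\widehat{X}$-dependent terms cancel.

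To organize this cancellation I would exploit the free identity furnished by the boundary conditions $\varGamma(T)=0$ and $\widehat{X}(0)=0$, namely
\[
0=\mathbb{E}\big[\langle\varGamma(T)\widehat{X}(T),\widehat{X}(T)\rangle-\langle\varGamma(0)\widehat{X}(0),\widehat{X}(0)\rangle\big]=\mathbb{E}\int_0^T\mathrm{d}\langle\varGamma(t)\widehat{X}(t),\widehat{X}(t)\rangle,
\]
and expand the integrand by It\^o's formula, using that $\varGamma(\cdot)$ is deterministic and symmetric and that $\widehat{X}(\cdot)$ solves the filtering SDE \eqref{FINAL:SDE_HAT}, and then replace $\dot{\varGamma}$ via the matrix-valued differential equation \eqref{DECOUPLING:Riccati Equation}. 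Adding this ``$0$'' to $V(\xi)$, the quadratic-in-$\widehat{X}$ terms cancel: this step rests precisely on \eqref{DECOUPLING:Riccati Equation} together with the algebraic identities $\mathcal{N}_{\varGamma}^{-1}\varGamma=\varGamma(\mathcal{N}_{\varGamma}^{-1})^{\top}$ and $N_2\mathcal{N}_{\varGamma}^{-1}\varGamma=I-(\mathcal{N}_{\varGamma}^{-1})^{\top}$, which follow from $\mathcal{N}_{\varGamma}=I+\varGamma N_2$ and $\varGamma(\cdot)\in\mathbb{S}_{+}^n$. The terms linear in $\widehat{X}$ then cancel upon inserting the definitions of $\widehat{b}$ and $\widehat{c}_2$ in \eqref{SDE_HAT:Coe} together with the drift of the BSDE with filtering \eqref{DECOUPLING:adjoint BSDE} for $(\varphi,\beta_1,\beta_2)$.

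For the surviving, $\widehat{X}$-free terms I would collect everything and repeatedly use standard properties of conditional expectation — in particular that an inner product with a deterministic outer factor is unchanged under filtering whenever one of its arguments is $\mathbb{G}$-adapted, and that $\widehat{\,\widehat{\beta}_2\,}=\widehat{\beta}_2$; writing $Z_2=\widehat{Z}_2+(\beta_2-\widehat{\beta}_2)$ (with $\widehat{Z}_2$ the $\mathbb{G}$-adapted part) makes the cross terms involving $\beta_2-\widehat{\beta}_2$ drop out in expectation, and similarly the $\widehat{\varphi}$- and $\widehat{\beta}$-terms reorganize into the stated expression. The main obstacle is not conceptual but purely computational: keeping track of the large number of cross terms so that the $\widehat{X}$-cancellation is transparent, and being careful throughout with the $\mathbb{G}$-conditioning. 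The three structural ingredients — the identity $\mathbb{E}\int_0^T\mathrm{d}\langle\varGamma\widehat{X},\widehat{X}\rangle=0$, the matrix-valued differential equation \eqref{DECOUPLING:Riccati Equation}, and the adjoint BSDE \eqref{DECOUPLING:adjoint BSDE} — are exactly what make the computation close, so no new idea beyond careful bookkeeping should be needed.
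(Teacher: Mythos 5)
Your argument is correct, but it follows a genuinely different route from the paper. The paper never expands the cost functional in terms of $\widehat{X}$: it applies It\^o's formula to $\langle X(\cdot),Y(\cdot)\rangle$ along the Hamiltonian system \eqref{FBSDE:Hamilton System} and uses the stationarity condition $S_3\widehat{Y}-B^{\top}\widehat{X}+Ru=0$ to show that $V(\xi)=\mathbb{E}\langle X(T),Y(T)\rangle=\mathbb{E}\langle X(T),\varphi(T)\rangle$, and then evaluates this single quantity by a second application of It\^o's formula to $\langle X(\cdot),\varphi(\cdot)\rangle$, using the explicit SDE \eqref{FINAL:SDE} for $X$, the BSDE \eqref{DECOUPLING:adjoint BSDE} for $\varphi$, and $X(0)=0$; the Riccati equation \eqref{DECOUPLING:Riccati Equation} never has to be invoked explicitly at this stage. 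Your route is the classical completion-of-squares verification: substitute the feedback representation \eqref{Decouple}, \eqref{FINAL:OPTIMAL CONTROL} into the (simplified) cost, add the identity $0=\mathbb{E}\int_0^T\mathrm{d}\langle\varGamma\widehat{X},\widehat{X}\rangle$, and cancel the quadratic-in-$\widehat{X}$ terms via \eqref{DECOUPLING:Riccati Equation}, the linear terms via \eqref{SDE_HAT:Coe}, and reorganize the remainder via conditioning. I checked that all three cancellations do close, using exactly the identities you cite ($\mathcal{N}_{\varGamma}^{-1}\varGamma=\varGamma(\mathcal{N}_{\varGamma}^{\top})^{-1}$ and $I-N_2\mathcal{N}_{\varGamma}^{-1}\varGamma=(\mathcal{N}_{\varGamma}^{\top})^{-1}$), so the approach is sound; like the paper, it still rests on Theorem~\ref{thm:OPTIMAL CONTROL} to identify $V(\xi)=J(\xi;u)$ with the constructed $u$. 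The trade-off is that the paper's duality argument is much shorter and hides the algebra in the adjoint pairing, while yours makes the role of the matrix-valued differential equation transparent at the cost of substantially heavier bookkeeping. Two small points you should make explicit: the expectation of the stochastic integral in your ``zero'' identity vanishes because $\widehat{X}\in S^2_{\mathbb{G}}(0,T;\mathbb{R}^n)$ and $\widehat{c}_2\in L^2_{\mathbb{G}}(0,T;\mathbb{R}^n)$ with bounded coefficients (a localization/BDG argument), and the cross terms involving $\beta_2-\widehat{\beta}_2$ (and similarly $\varphi-\widehat{\varphi}$) only disappear after taking expectation and conditioning on $\mathcal{G}_t$, not pathwise.
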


\begin{proof}
Let $(X(\cdot),Y(\cdot),Z_1(\cdot),Z_2(\cdot),u(\cdot))$ be the solution to Hamiltonian system \eqref{FBSDE:Hamilton System}. On the one hand, applying Itô's formula to $\langle X(\cdot), \, Y(\cdot) \rangle$, we get
\[\begin{aligned} 
	\mathbb{E}\langle X(T),\, Y(T)\rangle  &=\mathbb{E} \int_{0}^{T}\Big[ \langle X,\, AY + Bu + C_1Z_1 + C_2Z_2\rangle   + \langle -A^{\top} + S_1^{\top}Z_1 + S_2^{\top}Z_2 + S_3^{\top}u,\, Y\rangle \\
	&\quad +\langle -C_1^{\top}X + S_1Y + N_{1}Z_1,\, Z_1\rangle   + \langle -C_2^{\top} + S_2Y + N_{2}Z_2,\, Z_2\rangle   \Big] \,\mathrm{d}t. \\
	&=\mathbb{E} \int_{0}^{T}\Big[ \langle S_1^{\top}Z_1 + S_2^{\top}Z_2 + S_3^{\top}u,\, Y\rangle   + \langle S_1Y + N_{1}Z_1,\, Z_1\rangle + \langle S_2Y + N_{2}Z_2,\, Z_2\rangle \\
	&\quad + \langle B^{\top}X,\, u\rangle  \Big] \,\mathrm{d}t.
\end{aligned}\]
On the other hand, from the definitions of the cost functional and the value function,
\[\begin{aligned}
V(\xi) &= J(\xi;u(\cdot)) \\
&= \mathbb{E} \int_{0}^{T}\big[\langle  Ru,\, u \rangle   + \langle N_{1}Z_1,\, Z_1\rangle   + \langle N_{2}Z_2,\, Z_2\rangle + 2\langle S_1Y,\, Z_1\rangle \\
&\quad+ 2\langle S_2Y,\, Z_2\rangle   + 2\langle S_3Y,\, u\rangle  \big] \,\mathrm{d}t\\
&=\mathbb{E} \int_{0}^{T}\Big[ \langle S_1^{\top}Z_1 + S_2^{\top}Z_2 + S_3^{\top}u,\, Y\rangle   + \langle S_1Y + N_{1}Z_1,\, Z_1\rangle + \langle S_2Y + N_{2}Z_2,\, Z_2\rangle\\
&\quad + \langle S_3Y + Ru,\, u\rangle \Big] \,\mathrm{d}t, \\
\end{aligned}\]
and note that
\[ 
\mathbb{E} \int_{0}^{T}\langle S_3Y + Ru,\, u\rangle \,\mathrm{d}t   = \mathbb{E} \int_{0}^{T}\langle S_3\widehat{Y} + Ru,\, u\rangle \,\mathrm{d}t   = \mathbb{E} \int_{0}^{T}\langle B^{\top}\widehat{X}, u\rangle \,\mathrm{d}t.
\]
Therefore, based on the above two aspects and $\varphi(T) = \xi = Y(T)$, it follows that
\[
 V(\xi) = \mathbb{E}\langle X(T),\, Y(T)\rangle = \mathbb{E}\langle X(T),\, \varphi(T)\rangle.
\]
Moreover, Theorem~\ref{thm:OPTIMAL CONTROL} shows that $X(\cdot)$ also satisfies \eqref{FINAL:SDE}. Applying Itô's formula to $\langle X(\cdot), \, \varphi(\cdot) \rangle$, we finally obtain the desired result. The proof is complete.
\end{proof}

\begin{Remark}
By employing transformation \eqref{Transformations}, we equivalently transform the original problem into solving one under condition \eqref{simplification}, which leads to the above results. Corresponding to \eqref{Transformations}, we introduce the notations
\[
\begin{aligned}
\mathcal{N}_{\varGamma}^\varPhi = I + \varGamma N_{2}^\varPhi, \quad\mathcal{B}_{\varGamma}^\varPhi = B + \varGamma (S_3^\varPhi)^{\top}, \quad\mathcal{C}_{\varGamma}^\varPhi = C_2 + \varGamma (S_2^\varPhi)^{\top}.	
\end{aligned}
\]
Then, with the argument in Section \ref{section:simplification} and the results in Section \ref{section:CONSTRUCT of the optimal control}, the conclusions can be easily extended to the original problem without \eqref{simplification}. Since the results are analogous, the details are omitted for brevity.
\end{Remark}

\section{Conclusions} \label{sec4}

In summary, we have studied an indefinite BSLQ optimal control problem with partial information. 
Our work fundamentally relies on the assumption that the cost functional is uniformly convex. We derive a Hamiltonian system, which is an FBSDE with filtering. 
To decouple this system, we furthe introduce a matrix-valued differential equation and a BSDE with filtering, both of which play crucial roles 
in the construction of the optimal control. To prove their solvability, inspired by \cite{BLQ5} and \cite{ILQ6}, we explore the relationship 
between forward and backward problems. Specifically, we show that the uniform convexity of the cost functional in the backward problem implies the uniform convexity of the cost functionals in a family of forward problems (see Theorem~\ref{thm:bfr}). 
Based on this, along with the solvability of the Riccati equations associated with this family of forward problems under the uniform convexity assumption (see Corollary~\ref{coro2}), 
we then prove the solvability of the matrix-valued differential equation for the backward problem
by taking limit, and consequently obtain the existence and uniqueness of the solution to the BSDE with filtering (see Theorem~\ref{thm:bRiccati}).
We provide explicit forms of optimal control and value function at the end of the paper. 
In our study, partial information brings filtering to the Hamiltonian system and affects the form of the matrix-valued differential equation.
Additionally, there is another type of incomplete information known as partial observation, which is more complicated.
We plan to investigate the corresponding indefinite stochastic LQ problem in future work.

\vspace{6pt} 


\end{document}